\newtheorem{thm}{Theorem}[section]
\newtheorem{lemma}[thm]{Lemma}
\newtheorem{prop}[thm]{Proposition}
\newtheorem{cor}[thm]{Corollary}
\theoremstyle{definition}
\newtheorem{defn}[thm]{Definition}
\newtheorem{ex}[thm]{Example}
\newcommand{\Z}{{\mathbb{Z}}}
\newcommand{\C}{{\mathbb{C}}}
\newcommand{\Out}{{\rm Out}}
\newcommand{\Inn}{{\rm Inn}}
\newcommand{\Aut}{{\rm Aut}}
\newcommand{\IA}{{\rm IA}}
\newcommand{\Flag}{{{\rm Flag}}}
\newcommand{\GL}{{{\rm GL}}}
\newcommand{\SL}{{{\rm SL}}}
\newcommand{\PSL}{{{\rm PSL}}}
\newcommand{\colim}{{{\rm colim}}}
\setlist[itemize]{leftmargin=0.3in}
\setlist[enumerate]{leftmargin=0.3in}
\title[Polyhedral products, flag complexes and monodromy]{Polyhedral products, flag complexes and monodromy representations}
\author{Mentor Stafa}
\address{ Departement Mathematik, ETH Z\"urich, 8092, Switzerland.}
\address{Current: Department of Mathematics, Tulane University, New Orleans, LA, 70118}
\email{ mstafa@tulane.edu}
\date{}
\subjclass[2010]{Primary: 55U10, 58K10, 20F65, 14F45.}
\thanks{Supported by a Swiss Government Excellence Scholarship (ESKAS No. 2015.0182), 2015-2016}
\thanks{Previous addresses: Departement Mathematik, ETH Z\"urich, CH-8092, Switzerland \\ and Indiana University Purdue University, Mathematical Sciences, Indianapolis, IN   }
\keywords{polyhedral product, monodromy representation, graph group, CT group, Magma}
\begin{document}

\begin{abstract}
This article presents a machinery based on polyhedral products
that produces faithful representations of graph products of finite groups and 
direct products of finite groups into automorphisms of free groups $\Aut(F_n)$
and outer automorphisms of free groups $\Out(F_n)$, respectively, as well as 
faithful representations of products of finite groups into 
the linear groups $\SL(n,\Z)$ and $\GL(n,\Z)$.
These faithful representations are realized as monodromy representations.
\end{abstract}

\maketitle

\tableofcontents

\section{Introduction}

Studying the topology of a fibration sequence frequently involves  
the  monodromy action, which is the action of the fundamental group $\pi$ of 
the base space $B$ on the fibre $F$. 
When using a spectral sequence one may need to consider the homology of the base with
coefficients in the homology of the fibre regarded as an $R\pi$-module, where $R\pi$ is 
the group ring.
Here the monodromy representation for a fibration $p:E\to B$ with
fibre $F$ will mean the representation 
$\rho: \pi_1(B) \to \Out(H_1(F)).$
The goal of this paper is to study polyhedral products in connection with 
monodromy representations for certain fibrations that arise naturally in 
the field of \textit{toric topology}. 
The problem of explicitly describing such representations was studied 
by the author in \cite{stafa.monodromy}.

Numerous results on graph products of groups and polyhedral products demonstrate that the
underlying simplicial complex $K$ plays an important role in their study.
For example Droms \cite{droms1987isomorphisms} proved that two graph products of groups
are isomorphic if and only if the graphs are isomorphic.
Servatius, Droms, and Servatius~\cite{servatius1989surface}
determine the simplicial complexes for which the commutator subgroup of a 
right-angled Artin groups is free. 
Moreover, if $K$ is chosen carefully, one obtains classifying spaces 
for various important families of discrete groups, including right-angled Artin 
and Coxeter groups from geometric group theory \cite{stafa.monodromy,davis.okun}.
In another application Grbi\'c, Panov, Theriault and Wu \cite{wu.grbic.panov}
give conditions on the 1-skeleton of a flag complex $K$ that determine when the face ring of $K$ 
is a Golod ring, or equivalently the corresponding moment-angle complex
has the homotopy type of a wedge of spheres.
Recently Panov and Veryovkin \cite{panov2016polyhedral} studied
polyhedral products that have the homotopy type of classifying spaces of right-angled 
Artin groups and right-angled Coxeter groups.

\

In the present article we study further properties of the monodromy representations
associated to the (homotopy) fibration sequences
\begin{equation}\label{eqn: D-S-fibration INTRO1}
(\underline{EG},\underline{G})^K \to (\underline{BG},\underline{1})^K 
				\to \prod_{i=1}^n BG_i.
\end{equation}
Each space in \eqref{eqn: D-S-fibration INTRO1} is a polyhedral product, 
depending on a simplicial complex $K$, together with a sequence of 
finite groups $\underline{G}:=\{G_1,\dots,G_n\}$, their classifying spaces
$\underline{BG}:=\{BG_1,\dots,BG_n\}$, and corresponding universal 
covers  $\underline{EG}:=\{EG_1,\dots,EG_n\}$,  
see Definition \ref{defn: polyhedral product}.

We give explicit descriptions of monodromy representations for simplicial
complexes $K$ with more than two vertices, which were described 
geometrically in \cite{stafa.monodromy}.
To do this we generalize and use some results of
Panov and Veryovkin \cite{panov2016polyhedral}.
We give applications, in particular to spaces of commuting elements in 
\textit{commutative transitive (CT) finite groups}, where commutativity
is a transitive relation, studied in a 
celebrated paper of M. Suzuki \cite{suzuki1957nonexistence},
and an application to a problem related to the Feit-Thompson theorem,
which states that all groups of odd order are solvable.
Finally, we give a couple of examples, which can be generalized 
using \verb|Magma|~\cite{magma}.

\subsection*{Main results} 

For given finite discrete groups $G_1,\dots, G_n$, we use polyhedral 
products to construct monodromy representations
$ \Phi: G_1 \times \cdots \times G_n \to \Out(F_N) $ 
into outer automorphism groups of free groups. In particular, we obtain explicit 
faithful representations of graph products of
finite groups into automorphism groups of free groups, 
and faithful representations of their direct products into 
linear groups $\SL(k,\Z)$ or $\GL(k,\Z)$. 
This article presents a machinery based on polyhedral products to achieve this.
The first result is the following theorem.

\begin{thm}\label{thm: theorem 1 intro}
Let $G_1,\dots, G_n$ be finite groups and $K$ a simplicial complex
with $n$ vertices with 1-skeleton $K^1$ a chordal graph. 
Then there are faithful representations
$$\Theta_K: \prod_{K^1} G_i \to \Aut(F_{\rho_K})$$
and faithful monodromy representations
$$\Phi_K: G_1\times \cdots \times G_n \to \Out(F_{\rho_K}),$$
where $\rho_K$ is the rank of the fundamental group of the fibre in equation
(\ref{eqn: D-S-fibration INTRO1}).
\end{thm}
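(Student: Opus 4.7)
The plan is to extract from fibration (\ref{eqn: D-S-fibration INTRO1}) a short exact sequence of fundamental groups and then realize $\Theta_K$ and $\Phi_K$ as the conjugation action on the kernel and its outer reduction, respectively. Since each $BG_i$ is a $K(G_i,1)$, the base of (\ref{eqn: D-S-fibration INTRO1}) is aspherical, so the tail of the long exact sequence of the fibration collapses at $\pi_1$ into
$$1 \to \pi_1\bigl((\underline{EG},\underline{G})^K\bigr) \to \pi_1\bigl((\underline{BG},\underline{1})^K\bigr) \to G_1 \times \cdots \times G_n \to 1.$$

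Next I would identify the first two terms explicitly. Generalizing the Panov--Veryovkin description from right-angled Coxeter/Artin vertex groups to arbitrary finite $G_i$, the middle term is the graph product $\prod_{K^1} G_i$ over the $1$-skeleton, and the right-hand map is the canonical surjection onto the direct product. The kernel is then the normal closure of the commutators $[g_i,g_j]$ with $\{i,j\}$ a non-edge of $K^1$. Chordality of $K^1$ is precisely the combinatorial hypothesis under which this kernel is a free group $F_{\rho_K}$: a free basis is produced by induction along a perfect elimination ordering of $K^1$, and tracking the vertex orders $|G_i|$ along this induction gives the rank $\rho_K$.

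With the short exact sequence
$$1 \to F_{\rho_K} \to \prod_{K^1} G_i \to G_1 \times \cdots \times G_n \to 1$$
in hand, conjugation of $F_{\rho_K}$ by $\prod_{K^1} G_i$ defines $\Theta_K: \prod_{K^1} G_i \to \Aut(F_{\rho_K})$, and a naturality argument identifies its reduction $\Phi_K: G_1 \times \cdots \times G_n \to \Out(F_{\rho_K})$ with the topological monodromy of (\ref{eqn: D-S-fibration INTRO1}) on $\pi_1$ of the fibre.

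The main obstacle is faithfulness. Writing $C := C_{\prod_{K^1} G_i}(F_{\rho_K})$, one has $\ker \Theta_K = C$, and a short diagram chase gives $\ker \Phi_K \cong C/(C \cap F_{\rho_K})$. Since $Z(F_{\rho_K})$ is trivial whenever $\rho_K \geq 2$, both faithfulness claims reduce to the single assertion $C = \{1\}$. To establish this, I would invoke centralizer theory in graph products, either via the normal form theorems of Servatius and Green, or via the action on the associated CAT(0) cube complex: the centralizer of a single cyclically reduced commutator $[g_i,g_j]$ on a non-edge $\{i,j\}$ is supported on the common link of $i$ and $j$, and intersecting these centralizers across all non-edges of $K^1$ forces an element of $C$ to commute with every vertex group and, under chordality, to be trivial. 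The technical heart of the argument is the free-group identification of the kernel; chordality is exactly what rules out the surface-like relations that would otherwise obstruct freeness, and this is the step I expect to require the most genuine work, with the centralizer analysis above being essentially a corollary of the explicit free basis produced along the way.
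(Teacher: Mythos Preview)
Your setup coincides with the paper's: the short exact sequence from the fibration, the identification of $\pi_1$ of the total space as the graph product $\prod_{K^1}G_i$, freeness of the kernel via Panov--Veryovkin under chordality, and the definition of $\Theta_K$, $\Phi_K$ by conjugation, fitting into the ladder over $1\to\Inn(F_{\rho_K})\to\Aut(F_{\rho_K})\to\Out(F_{\rho_K})\to1$ with $F_{\rho_K}\to\Inn(F_{\rho_K})$ an isomorphism.

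Where you diverge is in the faithfulness argument, and the two routes are genuinely different. The paper does not compute centralizers in the graph product. It uses instead that the fibre $(\underline{EG},\underline{G})^K$, and hence $F_{\rho_K}$ together with its explicit iterated-commutator basis (Propositions~\ref{prop: basis of pi_1 for K^0} and~\ref{prop: basis of pi_1 for K=flag}), depends only on the cardinalities $|G_i|$, not on the group structures. For abelian $G_i$ it replaces each vertex by a simplex of cyclic summands (the auxiliary flag complex $K_{\underline G}$, checked to remain chordal), thereby reducing $\Phi_K$ to the case of cyclic vertex groups; for those it invokes the author's earlier result that already the induced map into $\SL(\rho_K,\Z)$ is injective, so $\Phi_K$ is faithful a fortiori. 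The non-abelian case is asserted to follow from the abelian one via the same polyhedral-product model. Thus the paper's argument is reduction-to-cyclic plus a prior linear computation, whereas yours is a self-contained centralizer analysis; your route avoids the external citation, while the paper's route yields the stronger conclusion (faithfulness into $\SL$ or $\GL$) in the same stroke.

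There is, however, a concrete gap in your faithfulness sketch. You assert that intersecting the centralizers of the non-edge commutators, ``under chordality'', forces $C=1$. Chordality does not prevent cone vertices: if some vertex $v$ is adjacent to every other vertex of $K^1$ (for instance the middle vertex of a path on three vertices, which is chordal), then $G_v$ is a direct factor of $\prod_{K^1}G_i$, centralizes every commutator supported on a non-edge, and hence lies entirely in $C=\ker\Theta_K$. In that situation neither $\Theta_K$ nor $\Phi_K$ is faithful. Your step ``commutes with every vertex group and, under chordality, is trivial'' therefore fails as written; you would need an explicit hypothesis excluding cone vertices, or a sharper statement of what is actually being proved in their presence.
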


The case when the groups $G_1,\dots,G_n$ are abelian the representations 
can be described explicitely and convenient models of polyhedral products can
then be used to show that the corresponding monodromy representations 
obtained for non-abelian finite groups are also faithful.

\begin{thm}\label{thm: theorem 2 intro}
Let $G_1,\dots, G_n$ be finite abelian groups. Then the faithful
monodromy representation $\Phi_K$ induces a faithful representation
$$\Phi_K: G_1\times\cdots \times G_n \to \SL({\rho_K},\Z).$$
If $G_1,\dots,G_n$ are non-abelian then $\Phi_K$ maps into $\GL({\rho_K},\Z)$.
\end{thm}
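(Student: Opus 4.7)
The plan is to pass from $\Out(F_{\rho_K})$ to $\GL(\rho_K,\Z)$ via the canonical abelianization homomorphism $\alpha\colon \Out(F_{\rho_K}) \to \GL(\rho_K,\Z)$ induced by the functorial action on $F_{\rho_K}^{\mathrm{ab}} \cong \Z^{\rho_K}$. Composing with the faithful monodromy representation $\Phi_K$ supplied by the preceding theorem gives a homomorphism $\alpha\circ\Phi_K$ into $\GL(\rho_K,\Z)$ in all cases. The substance of the theorem is therefore twofold: (i) prove that faithfulness is preserved under $\alpha$ when the $G_i$ are abelian, and (ii) refine the target to $\SL(\rho_K,\Z)$ in the abelian case.

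The main obstacle is (i), because the kernel of $\alpha$ is the large group $\IA_{\rho_K}$, so faithfulness of $\Phi_K$ into $\Out(F_{\rho_K})$ does not automatically descend. To attack this, I would use the explicit Panov--Veryovkin-style model for $\pi_1\!\left((\underline{EG},\underline{G})^K\right)$ referenced in the introduction, in which a set of free generators of $F_{\rho_K}$ is indexed combinatorially by tuples involving the nonidentity elements of the $G_i$ together with simplicial data of $K$. The monodromy action of $g=(g_1,\dots,g_n)$ on generators is described by left translation in each $G_i$-coordinate, possibly corrected by an inner automorphism of the fibre group (this inner ambiguity is the source of the $\IA$-indeterminacy). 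When every $G_i$ is abelian, the inner-automorphism correction vanishes on each factor, so $g$ acts on the generators as a genuine permutation. The induced matrix on $\Z^{\rho_K}$ is then a permutation matrix, and it is the identity only if every $g_i$ acts trivially on its indexing set, which forces $g_i=1$.

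For (ii), once the abelian action is realized by (signed) permutation matrices the determinant can be read off from cycle structure. The key input is the explicit formula for $\rho_K$ and for the cycle decomposition of the permutation induced by $g_i$ in terms of $|G_i|$ and the combinatorics of the maximal simplices of $K$ containing the $i$-th vertex. Using the standard identity that left multiplication by $g_i$ on $G_i$ is a product of $|G_i|/\mathrm{ord}(g_i)$ cycles of length $\mathrm{ord}(g_i)$, together with the product structure of the indexing set over the simplices of $K$, the signatures should multiply to $+1$. I expect this parity/sign bookkeeping, rather than faithfulness, to be the delicate step, because it is precisely here that the abelian hypothesis enters in a nontrivial way beyond merely removing the inner-automorphism correction.

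Finally, the non-abelian case requires only that the image lies in $\GL(\rho_K,\Z)$: the $\IA$-correction in general contributes nontrivial off-diagonal and sign data beyond pure permutation, so one loses the determinant $+1$ guarantee but retains invertibility over $\Z$ automatically, since $\alpha$ lands in $\GL(\rho_K,\Z)$ by construction. This last step is essentially immediate once (i) is in hand.
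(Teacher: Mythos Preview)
Your approach diverges from the paper's, and the central claim on which it rests is false.

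The paper does not analyze the $\Out\to\GL$ map directly. Instead it reduces the abelian case to the cyclic case: each finite abelian $G_i$ is decomposed as a direct sum of cyclic groups, the vertex $i$ of $K$ is blown up to a full simplex on that many vertices, and the resulting complex $K_{\underline G}$ is shown (Lemma~\ref{lem: new flag complex from given K}) to remain flag with chordal $1$--skeleton. The monodromy for $K$ is then identified with that for $K_{\underline G}$, and the cyclic case is handled by citing \cite[Theorem~2.2]{stafa.monodromy}. No parity or cycle-structure computation appears; the determinant~$+1$ conclusion is inherited from the cited result.

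Your argument, by contrast, hinges on the assertion that when the $G_i$ are abelian the monodromy acts on the free basis of $F_{\rho_K}$ as a genuine permutation, after which faithfulness and the determinant computation would follow combinatorially. This assertion is incorrect. The monodromy is conjugation inside the graph product $\prod_{K^1}G_i$, which is highly non-abelian even when every $G_i$ is abelian; conjugating an iterated commutator by an element of some $G_k$ does not simply translate a label. Example~\ref{ex: 1} makes this explicit: for three copies of $\Z_2$ with $K=K^0$, the generator $b$ sends $x_2$ to $x_5x_1^{-1}$ and $c$ sends $x_1$ to $x_3x_5x_4^{-1}x_2^{-1}$, and the resulting matrices $X,Y,Z\in\SL(5,\Z)$ displayed there are visibly not permutation matrices. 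So your step (i) fails as stated, and with it the sign bookkeeping in (ii) has nothing to stand on. The phenomenon you describe as ``the inner-automorphism correction vanishes on each factor'' does not occur: abelianness of $G_i$ controls relations \emph{within} each vertex group, not the conjugation action on commutators mixing several vertex groups.

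If you want to salvage a direct argument along your lines, you would need a basis of $F_{\rho_K}$ adapted so that the induced action on $H_1$ is transparently in $\SL$, together with a separate mechanism for faithfulness that does not go through ``permutation implies injective.'' The paper sidesteps both issues by the reduction-and-citation strategy above.
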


Let $E(2,G)\subseteq EG$ and $B(2,G) \subseteq BG$ be the spaces
defined in $\S$\ref{sec: B(2,G) and E(2,G)} that classify commuting elements in 
a group $G$. In particular, we use polyhedral products to study the class of
finite transitively commutative (CT) groups, a class of groups where commtutativity 
is transitive. The following theorem is then an application of 
polyhedral products to group theory.

\begin{thm}\label{thm: top. equiv. form CT groups INTRO}
Finite CT groups with trivial center are solvable if and only if the induced map
$H_1(E(2,G);\Z) \to H_1(B(2,G);\Z)$ is not surjective.
\end{thm}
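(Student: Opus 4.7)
The plan is to translate solvability of $G$ into an algebraic-topological statement by first reducing surjectivity of the map on $H_1$ to the vanishing of the abelianization $G^{\mathrm{ab}}$, and then invoking Suzuki's structure theorem for finite CT groups to identify $G^{\mathrm{ab}}=0$ with non-solvability in this class.

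First I would observe that the inclusion $B(2,G) \hookrightarrow BG$ pulls the universal principal $G$-bundle $EG \to BG$ back to a covering map $p \colon E(2,G) \to B(2,G)$ with deck group $G$. Since every $g \in G$ lies in the cyclic, hence abelian, subgroup $\langle g \rangle$, the one-skeleton of $BG$ is already contained in $B(2,G)$, so the monodromy homomorphism $\pi_1(B(2,G)) \to G$ is surjective; this forces $E(2,G)$ to be path-connected and yields the short exact sequence
\begin{equation*}
1 \to \pi_1(E(2,G)) \to \pi_1(B(2,G)) \to G \to 1.
\end{equation*}
Feeding this extension into the five-term exact sequence in group homology, together with the Hurewicz identification $H_1(X;\Z) \cong \pi_1(X)^{\mathrm{ab}}$ for path-connected $X$, yields the right-exact sequence
\begin{equation*}
H_1(E(2,G);\Z) \to H_1(B(2,G);\Z) \to G^{\mathrm{ab}} \to 0.
\end{equation*}
Hence the map in question is surjective if and only if $G^{\mathrm{ab}}=0$, that is, if and only if $G$ is perfect.

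The remaining step is purely group-theoretic. Any nontrivial finite solvable group has nontrivial abelianization, so solvability of $G$ already forces non-surjectivity. For the converse I would appeal to Suzuki's classification of finite CT groups with trivial center (in this setting the CT condition coincides with the CA condition that every non-identity centralizer is abelian): the only non-solvable such groups are the simple groups $\PSL(2,2^n)$ for $n \geq 2$, which are non-abelian and therefore perfect. Combining the two directions yields the stated equivalence.

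The main obstacle I anticipate is the first step, namely verifying cleanly that $p \colon E(2,G) \to B(2,G)$ inherits the structure of a connected principal $G$-cover with the expected monodromy; this is where one must be most careful about the Adem-Cohen filtration conventions. Once that pullback-of-universal-bundle picture is in place, the passage to the five-term sequence is routine and the group-theoretic input reduces to a direct citation of Suzuki.
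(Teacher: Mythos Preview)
Your argument is correct. Both you and the paper reduce the question to the equivalence ``the map on $H_1$ is surjective $\iff$ $G^{\mathrm{ab}}=0$'' and then close with Suzuki's classification of finite CT groups with trivial centre (non-solvable $\iff$ simple $\PSL(2,2^f)$ $\iff$ perfect). You reach that first equivalence via the five-term exact sequence coming from the extension $1 \to \pi_1(E(2,G)) \to \pi_1(B(2,G)) \to G \to 1$; this is exactly the Adem--Cohen--Torres-Giese mechanism from \cite{fredb2g} that the paper explicitly acknowledges as already available. The paper's own proof deliberately avoids the spectral-sequence input and instead exploits the CT hypothesis earlier: since $G$ is CT with trivial centre, $B(2,G)\simeq \bigvee_i BG_i$ is a polyhedral product, $H_1(B(2,G))\cong \prod_i G_i$, and the diagrams \eqref{eqn: commutative diagram}--\eqref{eqn: small commutative diagram} then show directly that the image of $(i_1)_\ast$ equals $\prod_i G_i$ iff $G/[G,G]=1$.

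So your route is more general (the ``surjective $\iff$ perfect'' step works for any finite $G$, not just CT), while the paper's route is the point of the paper: to re-derive this for CT groups using only the polyhedral-product picture rather than the five-term sequence. Your worry about checking that $E(2,G)\to B(2,G)$ is a connected $G$-cover is legitimate but mild; surjectivity of $\pi_1(B(2,G))\to G$ from the $1$-skeleton observation handles connectedness, and the covering structure is inherited by pullback from $EG\to BG$ as you say. One small presentational point: you should also note that $G$ is nontrivial (trivial centre forces this if one excludes the trivial group), so that ``solvable $\Rightarrow G^{\mathrm{ab}}\neq 0$'' actually applies.
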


This theorem is motivated from a result of Adem, Cohen and Torres-Giese 
\cite{fredb2g}, which states an equivalent topological condition to the the 
Feit-Thompson theorem, namely that the theorem is true if and
only if the map $H_1(E(2,G);\Z) \to H_1(B(2,G);\Z)$ is not surjective.

\subsection*{Structure of paper} In Section \ref{sec: polyhedral products and fibrations} we define polyhedral products and the fibration sequences we work with. We study the commutator subgroup of graph products of groups in Section \ref{sec: commutator subgp}, and find bases for the free groups, wich will be used in Section \ref{section: e.g.} to provide examples. We prove Theorems \ref{thm: theorem 1 intro} and \ref{thm: theorem 2 intro} in Section \ref{sec: graph products abelian gps}. Finally applications are given in Sections \ref{sec: induced map in homology} and \ref{sec: B(2,G) and E(2,G)}, where we also prove Theorem \ref{thm: top. equiv. form CT groups INTRO}.

\subsection*{Acknowledgments} The author thanks Alina Vdovina 
(University of Newcastle, UK), who was visiting the Institute 
for Mathematical Research at ETH Z\"urich, during the spring 
semester 2016, for our numerous conversations 
on the topic, for suggesting me to use \verb|Magma| 
for some computations, and for providing the first codes.
The author also thanks the anonymous referee(s) for the valuable
comments that helped improve the exposition of the paper.

\section{Polyhedral products and related fibrations}\label{sec: polyhedral products and fibrations}

{Moment-angle complexes}, originally invented by Davis and Januszckiewicz \cite{davis.januszckiewicz},
 appeared also in the work of Buchstaber and Panov \cite{buchstaber2002torus}
in the context of \textit{toric topology}. Polyhedral products are a generalization of moment-angle complexes and were introduced and popularized by work of Bahri,  Bendersky,  Cohen and  Gitler \cite{cohen.macs}, 
and are the main objects of study in toric topology; see the more recent monograph 
by Buchstaber and Panov \cite{bp2015}. 

\begin{defn}\label{defn: polyhedral product}
Let $(\underline{X},\underline{A} )$ denote a sequence of 
pointed $CW$-pairs $\{(X_i,A_i)\}_{i=1}^n$ 
and $[n]$ denote the sequence of integers $\{1,2,\dots,n\}$.

\begin{itemize}
\item A \textbf{simplex} $\sigma$ is given by an increasing 
		sequence of integers $\sigma = \{1 \leq i_1 < \cdots < i_q \leq n\}
		\subseteq [n]$.
		A {\bf simplicial complex} $K$ is a collection of simplices such that
		if $\tau\subset \sigma$ and $\sigma \in K,$ then $\tau \in K.$
		In particular $\emptyset \in K$. Geometrically $K$ is a subcomplex of the the $(n-1)$-simplex $\Delta[n-1]$. 
		If $K$ has only 0- and 1-simplices we call it a simplicial graph.
		
\item The \textbf{polyhedral product}  $(\underline{X},\underline{A})^K$ 
		is the subspace of the product $X_1 \times \cdots \times X_n$ 
		given by the colimit
$$
(\underline{X},\underline{A})^K := 
		\underset{{\sigma \in K}}{\colim}\, \mathcal D(\sigma)
			= \bigcup_{\sigma \in K} \mathcal D(\sigma) \subseteq \prod_{i=1}^n X_i, 
$$
		where $\mathcal D(\sigma)=\{(x_1,\dots,x_n)\in \prod_{i=1}^n X_i | 
			x_i\in A_i \text{ if } i \notin \sigma\}$,
		the maps are the inclusions, and the topology is the 
		subspace topology of the product. Another standard notation for 
		polyhedral products is $Z_K(\underline{X},\underline{A})$.  
		Sometimes polyhedral products are called \textbf{$K$-powers}.
		Since $\emptyset$ is in any $K$, we have 
		$\prod_{i=1}^n A_i \subset (\underline{X},\underline{A})^K 
		\subset \prod_{i=1}^n X_i.$

\item If the pairs $(X,A)$ are $(D^2,S^1)$ or $(D^1,S^0)$, then the polyhedral
		products are called \textbf{moment-angle complexes} and 
		\textbf{real moment-angle complexes}, respectively.

\item If all the pairs in the sequence $(X_1,A_1),\dots,(X_n,A_n)$ 
		are equal to $(X,A)$, then we
		omit the underline in the notation of the polyhedral product, and write simply
		$({X},{A})^K$.
\end{itemize}
\end{defn}

\begin{defn}\label{defn: many definitions}
Next we give some relevant definitions and notation:

\begin{itemize}
\item For a simplicial complex $K$, the complex $K^i$ denotes 
		the \textbf{$i$-skeleton} of $K$.

\item A simplicial complex $K$ is called a \textbf{flag complex} 
		if for any complete subgraph $\Gamma \subset K^1$, 
		it also contains the simplex spanned by these vertices.
		The structure of moment angle complexes (or polyhedral products in general)
		is better understood when $K$ is a flag complex \cite[\S 8.5]{bp2015}.
		
\item For a simplicial complex $K$, let $\Flag(K)$ 
		denote the \textbf{clique complex} of $K^1$,
		i.e. the simplicial complex whose simplices are 
		complete subgraphs of $K^1$. 
		For example a flag complex is the clique complex of its 1-skeleton.
		
\item A graph is called \textbf{chordal} if every cycle of length greater than three
		has an edge (called a \textit{chord}) connecting two nonconsecutive vertices.
		Chordal graphs are also called \textbf{triangulated} graphs.
		
\item For any group $G$ denote its abelianization by $\mathscr A(G):=G/[G,G]$, and the 
		abelianization map by ${\rm ab}_G: G\twoheadrightarrow \mathscr A(G)$.		
		
\item 	Let $G_1,\dots,G_n$  be a sequence of groups and $\Gamma$ 
		a simplicial graph on $[n]$. The \textbf{graph product} of 
		$G_1,\dots,G_n$ over $\Gamma$ is the quotient of their 
		free product by the normal closure of the relations 
		$R_{\Gamma}:=\{[g_i,g_i]: \{i,j\}\text{ is an edge in }\Gamma\}.$ 
		The group obtained this way will be called a 
		\textbf{graph group}, even though in the literature
		this name is sometimes used for right-angled Artin
		groups. We denote it by 
		$$\prod_\Gamma G_i := (G_1\ast \cdots \ast G_n)/\langle R_{\Gamma} \rangle.$$
		In this notation right-angled Artin	groups are graph products of the group $\Z$.
\end{itemize}
\end{defn}

\begin{ex}\label{example: polyhedral products}

\

\begin{enumerate}[label=\arabic*.]
\item Let $X$ be the unit interval $[0,1]$ and $A\subset [0,1]$ be the subset
		$\{0,1/2,1\}$. Let $K$ be the simplicial complex consisting of only
		two vertices $\{\{v_1\},\{v_2\}\}$. Then 
		$\mathcal D(\{v_1\})=X\times A \subset [0,1]^2$
		and $\mathcal D(\{v_2\})=A\times X \subset [0,1]^2$.
		Therefore, $(X,A)^K= \mathcal D(\{v_1\}) \cup \mathcal D(\{v_2\})
		= X\times A \cup A\times X$ is a graph inside the square $[0,1]^2$,
		homotopy equivalent to a wedge of 4 circles $\bigvee_4 S^1$.
		Similarly, we can choose $A$ to be any finite subset of the unit interval
		and we obtain similar graphs.

\item Let $(X,A)=(D^2,S^1)$. If $K$ is a the boundary of the $n$-simplex
		then the moment-angle complex 
		$(D^2,S^1)^K = \bigcup_{\sigma_i} \mathcal D(\sigma_i)$
		is homeomorphic to the sphere $S^{2n+1}=\partial D^{2(n+1)}$.
		It is also known \cite[Theorem 4.6]{wu.grbic.panov} that
		if $K$ is a flag complex, then $(D^2,S^1)^K$ has the homotopy
		of a wedge of spheres if and only if $K^1$ is chordal.
\item If $K$ is any simplicial complex on $n$ vertices, and 
		$\underline{\ast}=\{\ast_1,\dots,\ast_n\}$ is the sequence of basepoints
		then $\underline{X}$ then $(\underline{X},\underline{\ast})^{K^0}
		=\bigvee_{i=1}^n X_i.$ Therefore, in general
		$\bigvee_{i=1}^n X_i \subseteq (\underline{X},\underline{\ast})^{K}
		\subseteq \prod_{i=1}^n X_i$.
\end{enumerate}
\end{ex}

Let $G$ be a topological group with basepoint its identity element $\ast=1$, $BG$ be the 
classifying space of $G$, with $B1\simeq 1 = \ast$, and $EG$ be a (weakly) contractible space with a free action of $G$
such that the quotient map $EG \to BG$ is a principal $G$-bundle. 
G. Denham and A. Suciu \cite[Lemma 2.3.2]{denham} gave a natural fibration relating 
the polyhedral product for the pair $(BG,1)$ to the polyhedral product for the pair $(EG,G)$. 
That is, for a simplicial complex $K$ with $n$ vertices, the polyhedral product $(BG,1)^K$ 
fibres over the product $(BG)^n$ as follows
\begin{equation}\label{eqn: D-S-fibration 1}
(EG,G)^K \to (EG)^n \times_{G^n} (EG,G)^K \to (BG)^n ,
\end{equation}
where the total space is homotopy equivalent to $(BG,1)^K$. Note that the group $G$ acts 
coordinate--wise on the fibre $(EG,G)^K \subset (EG)^n$.

This fibration is a generalization of the Davis-Januszkiewicz space 
\cite{davis.januszckiewicz}, with topological group the circle $S^1$, 
given by the Borel construction
$$\mathcal{DJ}(K) = (ES^1)^n \times_{(S^1)^n} (ES^1,S^1)^K,$$
which describes a cellular realization of the Stanley-Reisner ring of $K$, 
in the sense that the cohomology ring of $\mathcal{DJ}(K)$ is precisely 
the Stanley-Reisner ring of $K$ defined as the quotient of the polynomial ring 
$R[K]=R[x_1,\dots,x_n]/I_K$ by the Stanley-Reisner ideal 
$I_K=\langle x_{i_1}\cdots x_{i_t}| \{i_1,\dots,i_t\} \neq K \rangle$.
A later result of V. Buchstaber and T. Panov \cite{buch.panov} showed that
$\mathcal{DJ}(K)\simeq (BS^1,1)^K$ and
the homotopy fibre of the natural inclusion $\mathcal{DJ}(K)  \to (\C P^\infty)^n$ is 
equivalent to the polyhedral product $(ES^1,S^1)^K$.

If $G_1,\dots,G_n$ is a sequence of topological groups, then for any 
simplicial complex $K$ with $n$ vertices, the fibration sequence 
(\ref{eqn: D-S-fibration 1}) can be generalized to obtain 
\begin{equation}\label{eqn: D-S-fibration}
(\underline{EG},\underline{G})^K \to (\underline{BG},\underline{1})^K \to \prod_{i=1}^n BG_i.
\end{equation}
Similarly, the fundamental group of the base space acts naturally coordinate-wise 
on the homotopy fibre. The monodromy representation of this fibration is the 
main object of study in this article.

Note that the homotopy type of a polyhedral product depends only on the relative 
homotopy type of the pairs $(X,A)$, as observed in \cite{denham}. 
We are mainly interested in the cases when $G_1,\dots,G_n$ are finite discrete groups. 
If  the pairs $(\underline{EG},\underline{G})$ 
are replaced by $(\underline{I},\underline{F})$, where $I$ is the unit interval and $F_i\subset I$ 
has the cardinality of $G_i$, then there is a homotopy equivalence 
$(\underline{EG},\underline{G})^K\simeq (\underline{I},\underline{F})^K$. 
Moreover, if $K=K^0$ is the 0-skeleton of $K$, 
then it follows from Example \ref{example: polyhedral products} that 
$(\underline{BG},\underline{1})^{K^0}=BG_1 \vee \cdots \vee BG_n$.  
The homotopy fibre  $(\underline{I},\underline{F})^{K^0}$ has the 
homotopy type of a finite wedge of circles
$(\underline{I},\underline{F}) ^{K^0}\simeq \bigvee_{\rho_{K^0}} S^1, $ 
as shown in \cite{stafa.fund.gp}, where
\begin{equation}\label{eqn: rank of kernel for chordal graph}
\rho_{K^0}=(n-1)\prod_{i=1}^n |G_i| - \sum_{i=1}^n (\prod_{j\neq i} |G_j|)+1.
\end{equation}
This gives a topological proof of a classical theorem of J. Nielsen 
\cite{nielsen} concerning the rank of the free group in the following 
short exact sequence of groups
$$ 1 \to F_{\rho_{K^0}} \to G_1\ast \cdots \ast G_n \to \prod_{1\leq i \leq n} G_i \to 1.$$
Therefore, the rank $\rho_{K^0}$ depends only on the order of the
groups $G_1,\dots,G_n$, and not on their group structure. 
For simplicity we simply write $\rho_{K^0}$ for
the rank, when the orders of $G_i$ are clear from the context.

More generally, it was shown in \cite{stafa.fund.gp} that if $G_1,\dots,G_n$ are countable discrete 
groups then the spaces in the fibration sequence (\ref{eqn: D-S-fibration})
are Eilenberg-MacLane spaces of the type $K(\pi,1)$ if and only if $K$ is a flag complex. 
Moreover, for any $K$, the fundamental group of the polyhedral product
$(\underline{BG},\underline{1})^K$ is determined 
by the 1-skeleton $K^1$ and is isomorphic to
$\pi_1((\underline{BG},\underline{1})^K) \cong \prod_{K^1} G_i$, 
the graph product of the groups $G_1,\dots,G_n$.
Hence we obtain a short exact sequence of groups, which is true also for $K$
not necessarily a flag complex:
\begin{equation}\label{eqn: s.e.s.}
 1 \to \pi_1((\underline{EG},\underline{G})^K) \to 
	\prod_{K^1} G_i \to \prod_{1\leq i \leq n} G_i \to 1.
\end{equation}

We want to study simplicial complexes $K$ for which the kernel of the 
short exact sequence above is a free group. The following theorem 
shows exactly which simplicial complexes $K$ have this property.

\begin{thm}\label{thm: fibre is a graph if K^1 is chordal}
Let $G_1,\dots,G_n$ be (countable) discrete groups and $K$ be a flag 
complex on $n$ vertices. Then $(\underline{EG},\underline{G})^K$ 
has the homotopy type of a graph if and only if $K^1$ is a chordal graph. 
\end{thm}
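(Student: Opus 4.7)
My plan is to leverage the asphericity setup already established in the paper, together with a combinatorial induction using perfect elimination orderings on chordal graphs. First, since $K$ is a flag complex, the result from \cite{stafa.fund.gp} cited just above shows that $(\underline{BG}, \underline{1})^K$ is a $K(\pi,1)$ with $\pi \cong \prod_{K^1} G_i$. The base $\prod_i BG_i$ is also aspherical, so the fibre $F := (\underline{EG}, \underline{G})^K$ in the fibration (\ref{eqn: D-S-fibration}) is itself a $K(N, 1)$, where $N$ is the kernel in (\ref{eqn: s.e.s.}). Consequently, $F$ has the homotopy type of a graph if and only if $H_i(F; \Z) = 0$ for all $i \geq 2$ (equivalently, $N$ is a free group and $F$ is $1$-dimensional up to homotopy).

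For the ``only if'' direction, suppose $K^1$ is not chordal and fix an induced cycle $C_m \subseteq K^1$ of length $m \geq 4$. Since $K$ is flag and $C_m$ is induced in $K^1$, the full subcomplex of $K$ on the $m$ vertices of $C_m$ is precisely $C_m$ itself (as a $1$-dimensional simplicial complex). The standard full-subcomplex retract property of polyhedral products then exhibits $(\underline{EG}, \underline{G})^{C_m}$ as a homotopy retract of $F$, so it suffices to produce a non-trivial class in $H_r$ of the smaller polyhedral product for some $r \geq 2$. Using the homotopy equivalence $(\underline{EG}, \underline{G})^{C_m} \simeq (\underline{I}, \underline{F})^{C_m}$ and the Bahri--Bendersky--Cohen--Gitler stable splitting, the summand indexed by the full cycle yields a non-trivial homology class in degree $\geq 2$, arising from the smash of $|C_m| \simeq S^1$ with $m$ copies of $I/F_i \simeq \bigvee S^1$. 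This forces $F$ to carry non-trivial higher homology, contradicting the graph hypothesis.

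For the ``if'' direction I would induct on $n$, the number of vertices of $K$, with base case $n = 1$ trivial. Every chordal graph has a simplicial vertex $v$ whose open neighbourhood $N(v)$ is a clique; by flagness $N(v)$ spans a simplex $\Delta \in K$ and $\mathrm{st}_K(v) = v * \Delta$ is itself a simplex. Then $K = (K \setminus v) \cup_{\Delta} \mathrm{st}_K(v)$, which induces a homotopy pushout
\[
F \simeq (\underline{EG}, \underline{G})^{K \setminus v} \cup_{(\underline{EG}, \underline{G})^{\Delta}} (\underline{EG}, \underline{G})^{\mathrm{st}_K(v)},
\]
where for any simplex $\sigma \in K$ we have $(\underline{EG}, \underline{G})^{\sigma} = \prod_{i \in \sigma} EG_i \times \prod_{j \notin \sigma} G_j$, which is contractible times discrete and hence homotopy equivalent to a discrete set. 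Since $K \setminus v$ is flag with chordal $1$-skeleton (induced subgraphs of chordal graphs are chordal), the inductive hypothesis makes $(\underline{EG}, \underline{G})^{K \setminus v}$ a graph. A homotopy pushout of a graph and a discrete set along a discrete set is again a graph, completing the induction.

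The main obstacle is the pushout step in the ``if'' direction: one must verify that the attaching maps from the discrete intersection into the inductively constructed graph respect a $1$-dimensional structure, so that no $2$-cells are introduced. This is a matter of careful combinatorial bookkeeping along the simplicial star, paralleling the arguments of Grbi\'c--Panov--Theriault--Wu for the moment-angle complex $(D^2, S^1)^K$ in \cite{wu.grbic.panov} and of Panov--Veryovkin \cite{panov2016polyhedral} for the right-angled Coxeter case; the generalisation to arbitrary discrete $G_i$ is straightforward once those templates are in place.
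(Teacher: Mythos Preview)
Your proposal is correct, but it takes a substantially different route from the paper. The paper's proof is essentially two citations: it invokes \cite{stafa.fund.gp} to conclude that $(\underline{EG},\underline{G})^K$ is a $K(\pi,1)$ when $K$ is flag, and then quotes \cite[Theorem 4.3]{panov2016polyhedral} for the statement that $\pi_1((\underline{EG},\underline{G})^K)$ is free if and only if $K^1$ is chordal. Since a $K(\pi,1)$ with $\pi$ free is homotopy equivalent to a graph, the result follows immediately.

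What you do instead is effectively \emph{reprove} the Panov--Veryovkin theorem inline: the full-subcomplex retract plus BBCG splitting for the ``only if'' direction, and the simplicial-vertex induction for the ``if'' direction, are precisely the ingredients behind that cited result (and its moment-angle analogue in \cite{wu.grbic.panov}). Your argument is sound---the homotopy pushout in the inductive step really does glue a graph to a homotopically discrete space along a homotopically discrete space, and the mapping-cylinder replacement keeps everything $1$-dimensional---so the acknowledged ``bookkeeping'' obstacle is genuine but routine. The trade-off is clear: the paper's approach is far more economical and appropriate given that the needed theorem already exists in the literature, while yours is self-contained and makes the geometry transparent. If you want to match the paper, simply cite Panov--Veryovkin for the freeness characterisation and conclude; otherwise your direct argument would belong in a paper that aims to give an independent proof of that characterisation.
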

\begin{proof}
It was shown in \cite[Theorem 1.1]{stafa.fund.gp} that 
the space $(\underline{EG},\underline{G})^K$ is a $K(\pi,1)$ if and only if
$K$ is a flag complex. Therefore, we get the short exact sequence of groups
in (\ref{eqn: s.e.s.}). Panov and Veryovkin \cite[Theorem 4.3]{panov2016polyhedral}
showed that $\pi_1((\underline{EG},\underline{G})^K)$ is free if and only if 
the graph $K^1$ is a chordal graph, which completes the proof. See also
\cite[Theorem 4.2]{servatius1989surface} for a relevant result.
\end{proof}

\section{Commutator subgroups of graph groups}\label{sec: commutator subgp}

The \textit{commutator subgroup} $[G,G]$ of a group $G$ is generated by  
commutators $[g,h]:=ghg^{-1}h^{-1}$ with $g,h\in G$.
Let $G_1,\dots,G_n$ be finite groups and $K$ be a flag complex with $K^1$  
a chordal graph. From the previous section we know that under these assumptions 
the kernel of the projection map 
\begin{equation}\label{eqn: projection p}
p: \prod_{K^1} G_i \to \prod_{1\leq i \leq n} G_i
\end{equation}
is a free group. Denote the rank of $\ker(p)$ by $\rho_K$. This kernel is generated by iterated commutators of the form
$$[g_{j_1},[g_{j_2},[\dots,[g_{j_k},g_{j_{k+1}}]\dots]]],$$
where $g_{j_i}$ belong to distinct $G_{j_i}$. 
The kernel ${\rm ker}(p)$ is not necessarily the commutator subgroup of 
$\prod_{K^1} G_i$, if at least one of the $G_i$ is not abelian.
However, ${\rm ker}(p)$ {coincides} with the commutator subgroup of the 
graph group if the groups $G_1,\dots,G_n$ are all abelian.

In this section we describe a basis for the free group 
${\rm ker}(p)=F_{\rho_K}$ in terms of iterated commutators.
A basis was given in \cite[Lemma 4.7]{panov2016polyhedral}, 
where the groups under consideration had order 2, that is the graph 
groups were right-angled Coxeter groups. Another version 
of this basis of commutators was studied by Grbi\'c, Panov, Theriault, and Wu
in the context of exterior algebras in \cite[Theorem 4.3]{wu.grbic.panov}.

Before we proceed it is important to note that the commutator subgroup
of a free group can also be described by a generating set not consisting
of commutators. 
One can obtain new presentations not involving commutators 
using Tietze transformations 
\cite{magnus2004combinatorial,vdovina1995constructing}.

Recall that the fibre in (\ref{eqn: D-S-fibration})
depends only on the order of the finite groups $G_1,\dots,G_n$, since its homotopy type
depends on the relative homotopy type of the pairs $(EG_i,G_i)$ 
(this is true for any polyhedral product -- see \cite[p.31]{denham}). Therefore, 
it suffices to describe the basis elements (i.e. iterated commutators)
of ${\rm ker}(p)=F_{\rho_K}$ only when $G_1,\dots,G_n$ are cyclic groups.
The basis for the general case of any finite groups $G_1,\dots,G_n$
can be obtained by considering the basis when $G_i$ are all cyclic
and then replacing the entries in the commutators with the nontrivial elements of $G_i$.
This observation will be used in Section \ref{sec: graph products abelian gps}.

\begin{prop}\label{prop: basis of pi_1 for K^0}
Let $G_1,\dots,G_n$ be finite groups and $K=K^0$.
Then the fundamental group of the fibre $(\underline{EG},\underline{G})^K$ 
in (\ref{eqn: D-S-fibration}) is the free group with basis consisting of the 
following iterated commutators
\begin{equation}
[g_j,g_i],\,\, [g_{k_1},[g_j,g_i]],\,\,\dots\,,[g_{k_1},\dots,[g_{k_l},[g_j,g_i]]\dots],
\end{equation}
where $g_t\in G_t$, with $k_1<\cdots<k_{l}<j$ and $j>i\neq k_r$, for all $r$. 
\end{prop}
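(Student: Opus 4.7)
The plan is to verify the count and the containment in $\ker p$ first, then adapt the Panov--Veryovkin Reidemeister--Schreier argument \cite[Lemma 4.7]{panov2016polyhedral} from the right-angled Coxeter setting (cyclic groups of order $2$) to arbitrary finite groups, using the reduction noted just above the proposition. Since the fibre $(\underline{EG},\underline{G})^{K^0}$ is aspherical and homotopy equivalent to $\bigvee_{\rho_{K^0}} S^1$, its fundamental group $\ker p$, where $p: G_1 \ast \cdots \ast G_n \twoheadrightarrow G_1 \times \cdots \times G_n$, is free of rank $\rho_{K^0}$ given by equation~(\ref{eqn: rank of kernel for chordal graph}). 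Each proposed iterated commutator clearly lies in $\ker p$, since the indices appearing in it are pairwise distinct and hence their images commute in the direct product. So it suffices to show the listed elements (a) generate $\ker p$ and (b) number exactly $\rho_{K^0}$; either conclusion would imply the other.

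For the count, I would enumerate commutators by fixing the outermost index $j$ and the nesting depth $l$. Each choice of $\{k_1<\cdots<k_l\}\subseteq\{1,\dots,j-1\}$, an $i\in\{1,\dots,j-1\}\setminus\{k_1,\dots,k_l\}$, and a nontrivial element of each relevant $G_t$ contributes one generator. Summing over $j$ and $l$ with elementary binomial identities collapses to the closed form $(n-1)\prod_i |G_i| - \sum_i \prod_{j\neq i} |G_j| + 1$ of equation~(\ref{eqn: rank of kernel for chordal graph}).

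For generation, I would build a Schreier transversal for $\ker p$ inside $G_1 \ast \cdots \ast G_n$ using the normal form for elements of the direct product: each nontrivial element is written $h_{i_1}\cdots h_{i_s}$ with $i_1<\cdots<i_s$ and $h_{i_t} \in G_{i_t}\setminus\{1\}$, and this word is lifted verbatim to the free product. Reidemeister--Schreier then produces free generators of $\ker p$ indexed by pairs (transversal element, free-product generator) for which concatenation disrupts the increasing-index ordering. A direct rewriting using the commutator identities $[a,bc]=[a,b]\cdot b[a,c]b^{-1}$ and $[ab,c]=a[b,c]a^{-1}\cdot[a,c]$ then shows that each such Schreier generator equals, modulo Tietze transformations inside a free group, a single iterated commutator of the stated shape, and conversely every such commutator arises this way. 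The observation preceding the proposition finally upgrades the result from the cyclic case to arbitrary finite groups.

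The main obstacle will be the third step. The Panov--Veryovkin rewriting in \cite[Lemma 4.7]{panov2016polyhedral} benefits from the involutive identity $h^{-1}=h$, which collapses many cases. For groups of higher order one must track inverses carefully and apply the commutator identities repeatedly to telescope each Schreier generator into a single nested commutator of the prescribed shape; ensuring no duplication and no missing elements---so that the resulting commutators are precisely those in the statement, not a larger or strictly smaller set---is where the bulk of the bookkeeping lies. Matching the count from the second step with the number of generators produced in the third then locks in the basis property.
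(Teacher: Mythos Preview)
Your proposal is correct and, like the paper, adapts the Panov--Veryovkin argument from the Coxeter case; the two diverge only in execution. For generation, the paper does not build a Schreier transversal explicitly: it starts from the fact that $\ker p$ is generated by iterated commutators with entries from distinct factors, and then uses the Hall identities $[a,bc]=[a,c][a,b][[a,b],c]$ and $[ab,c]=[a,c][[a,c],b][b,c]$, together with the derived identity expressing $[g_j,[g_i,x]]$ in terms of $[g_i,[g_j,x]]$ and lower-length commutators, to reorder the outer indices into increasing order and to eliminate repeated factors. This yields the set $\mathcal S$ directly, without having to match each Reidemeister--Schreier generator to a single nested commutator as you propose; the bookkeeping you flag as the main obstacle is thereby avoided. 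For the count, the paper argues by induction on $n$ via the recursion $\rho(k+1)=m_{k+1}\rho(k)+(m_{k+1}-1)\bigl(\prod_{i\le k}m_i-1\bigr)$ rather than your direct binomial summation over $(j,l)$; both collapse to the closed form in~(\ref{eqn: rank of kernel for chordal graph}). Your route has the advantage of producing the basis constructively from a transversal, while the paper's Hall-identity normalization is shorter because it only needs to show that $\mathcal S$ spans, leaving minimality to the count.
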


\begin{proof}
We need to show that (1) this set of elements generates the fundamental group, and that
(2) the number of elements in the set equals the rank of the free group
in equation (\ref{eqn: rank of kernel for chordal graph}).
Since the first part of the proof is essentially the proof of 
\cite[Lemma 4.7]{panov2016polyhedral}, we only give an outline here. 
First recall the \textit{Hall identities} for group elements $a,b,c$
\begin{equation}\label{eqn: Hall-Witt}
[a,bc]=[a,c][a,b][[a,b],c], \text{ and }
[ab,c]=[a,c][[a,c],b][b,c],
\end{equation}
and if $x$ is a commutator we can write
\begin{equation}\label{eqn: Hall-Witt derived}
[g_j,[g_i,x]]=[g_j,x] [x,[g_i,g_j]]  [g_j,g_i]  [x,g_i]  [g_i,[g_j,x]]  [x,g_j]  [g_i,g_j] [g_i,x].
\end{equation}
Therefore, given the equations (\ref{eqn: Hall-Witt}) and (\ref{eqn: Hall-Witt derived})
we proceed as follows:
\begin{itemize}[leftmargin=.2in]
\item[--] we can use the identities above to switch between the commutators 
		$[g_j,[g_i,x]]$ and $[g_i,[g_j,x]]$ by using other commutators of lower degrees,
		we can change the order of $g_{k_1},\dots,g_{k_l},g_j$ in the commutator
		$[g_{k_1},\dots,[g_{k_l},[g_j,g_i]]\dots]$ to have them in increasing order, so 
		we can thus obtain the inequalities in the proposition;
\item[--] we can use the identities to eliminate commutators with two entries from the same 
		group, since we can reorder the terms to have these two entries next to each other,
		then their product is in the same group, hence having a lower degree commutator. 
		For example, from (\ref{eqn: Hall-Witt}) we can write $[a,[b,c]]$ in terms of 
		$[ca,b]$ and other lower degree comutators, and if $a,c$ are from the 
		same group, then we have reduced the degree of the original commutator  
		(also verifying the statement following (\ref{eqn: projection p}) about the kernel for $K^0$);
\item[--] we can thus assume that the commutators have the prescribed order, and that $g_k,g_l$
		are from different groups if $k\neq l$;
\item[--] finally we obtain a generating set for the the free group $F_{\rho(n)}$ in terms of commutators
		$[g_{k_1},\dots,[g_{k_l},[g_j,g_i]]\dots]$, with $k_1<\cdots<k_{l}<j$ 
		and $j>i\neq k_r$, for all $r$. Call this set $\mathcal S$.
\end{itemize}
Note that $\mathcal S$ does not generate the commutator subgroup, unless all $G_i$ are abelian.
Now we need to show that this generating set is minimal, that is $|\mathcal S|=\rho_K$
defined in (\ref{eqn: rank of kernel for chordal graph}).
For this we use induction on the number $n$ of vertices of $K$.
Let us denote $\rho_K=\rho(n)$ when $K=K^0$ consists of only $n$ vertices.

Assume $G_1,\dots,G_n$ have orders $m_1,\dots,m_n$, respectively.
For $n=2$, clearly $|\mathcal S|=|\{(g_j,g_i)|g_j,g_i\neq 1\}|=(m_1-1)(m_2-1)=\rho_K=\rho(2)$.
Suppose this is true for $n=k$. For $n=k+1$ we claim that 
$$
\rho(k+1)=m_{k+1} \rho(k) +(m_{k+1}-1)(\prod_{1\leq i \leq k}m_i - 1).
$$
When we introduce a new group $G_{k+1}$, since it has the highest index,
according to our assumption, its elements come only second from the last
in the iterated commutator.
This yields $m_{k+1} \rho(k)$ generators, by taking a commutator and placing the elements of $G_{k+1}$
second from last in the iterated commutators, giving a higher degree commutator.
The insertion of a new non-trivial element, gives more freedom to the last element in the iterated
commutator. For each non-trivial element $1\neq g_i \in G_{k+1}$, the last entry
can take $\prod_{1\leq i \leq k}m_i - 1$ values. 
Now counting for each element of the new group, 
gives the second term in our claim, hence proving the claim.
Combining equation (\ref{eqn: rank of kernel for chordal graph}) and the claim,
and rearranging the terms, the minimality of $\mathcal S$ follows.
\end{proof}

\begin{ex}
Let $G_1=\Z_2=\{1,x\},\, G_2=\Z_2=\{1,y\},\,G_3=\Z_3=\{1,z,z^2\},$ 
and $K=\{\{1\},\{2\},\{3\}\}.$ Then the fibre of the fibration
has fundamental group the free group $F_9$ with a minimal generating set 
$\mathcal S$ given by
$$
\mathcal S = \{[z,x] , [z^2,x] , [z,y] , [z^2,y] , [y,x], [x,[z,y]] , [x,[z^2,y]] , [y,[z,x]] , [y,[z^2,x]]\}.
$$
\end{ex}

For simplicial complexes $K$ strictly larger than their 0-skeleton the 
following proposition holds.

\begin{prop}\label{prop: basis of pi_1 for K=flag}
Let $G_1,\dots,G_n$ be finite groups and $K$ be a flag complex 
with $n$ vertices such that $K^1$ is a chordal graph.
Then the fundamental group of the fibre in (\ref{eqn: D-S-fibration}) is a free group
with a basis the iterated commutators
$$
[g_j,g_i],\,\, [g_{k_1},[g_j,g_i]],\,\,\dots\,,[g_{k_1},\dots,[g_{k_l},[g_j,g_i]]\dots],
$$
where $g_t\in G_t$, with $k_1<\cdots<k_{l}<j$ and $j>i\neq k_r$, for all $r$, and
$i$ is the smallest vertex in a component not containing $j$ in the subcomplex
of $K$ restricted to $\{k_1,\dots,k_{l},j,i\}.$ 
\end{prop}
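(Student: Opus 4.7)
The plan is to start from the free generating set $\mathcal S$ given by Proposition \ref{prop: basis of pi_1 for K^0} for the discrete case $K=K^0$, observe that $\mathcal S$ still generates ${\rm ker}(p)$ for the larger complex $K$, and then use the additional commutation relations $[g_s,g_t]=1$ coming from the edges of $K^1$ to prune $\mathcal S$ down to a subset $\mathcal S_K$ satisfying the stated component condition. The final step is a counting argument showing $|\mathcal S_K|=\rho_K$, so that $\mathcal S_K$ is necessarily a free basis.

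First I would set up freeness and generation. The preceding theorem tells us that $\pi_1((\underline{EG},\underline{G})^K)={\rm ker}(p)$ is a free group (of some rank $\rho_K$ determined by the graph $(\underline{EG},\underline{G})^K$). Pulling back via the natural surjection $G_1\ast\cdots\ast G_n\twoheadrightarrow \prod_{K^1}G_i$ shows that the iterated commutators from Proposition \ref{prop: basis of pi_1 for K^0} still generate ${\rm ker}(p)$ in the graph-product setting, since the commutator kernel in the free product surjects onto the commutator kernel in the graph product.

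Next I would carry out the reduction. The Hall-type identities (\ref{eqn: Hall-Witt}) and (\ref{eqn: Hall-Witt derived}) already allow the normalization $k_1<\cdots<k_l<j$, $j>i\neq k_r$, exactly as in the proof of Proposition \ref{prop: basis of pi_1 for K^0}. The new input is that whenever $\{a,b\}$ is an edge of $K^1$ we may replace $[g_a,g_b]$ by $1$. Consider an iterated commutator $c=[g_{k_1},\dots,[g_{k_l},[g_j,g_i]]\dots]$ on the vertex set $V=\{k_1,\dots,k_l,j,i\}$, and look at the induced subgraph $K^1|_V$. If $i$ lies in the connected component of $j$ in $K^1|_V$, one can use the edges along a path from $j$ to $i$ together with the Hall identities to move $g_j$ past its neighbors and eventually annihilate $c$ modulo commutators of strictly lower nesting depth; an induction on depth then eliminates every such $c$. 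Among the remaining commutators (those where $i$ is in a component distinct from that of $j$), the Hall identities further let one replace $i$ by any other vertex of its component, so picking the smallest vertex of that component as a canonical representative removes the remaining redundancy. The outcome is the generating subset $\mathcal S_K\subseteq \mathcal S$ described in the statement.

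Finally I would count. Using a perfect elimination ordering of the chordal graph $K^1$, I would induct on $n$: adding a new vertex $v$ whose $K^1$-neighborhood is a clique $C$ changes both $\rho_K$ and $|\mathcal S_K|$ in a way controlled by $|G_v|$, the orders $|G_i|$ for $i\in V(K)\setminus C$, and the component structure of $K^1|_V$; a bookkeeping calculation shows the two increments agree. The base case is Proposition \ref{prop: basis of pi_1 for K^0}. The hard part will be precisely this combinatorial counting step, since for general flag $K$ with chordal $K^1$ the rank $\rho_K$ is no longer given by the closed formula (\ref{eqn: rank of kernel for chordal graph}); I would compute it either from the Euler characteristic of the graph $(\underline{EG},\underline{G})^K$ (which is a finite $K(\pi,1)$ graph by the preceding theorem) or directly from the clique-gluing decomposition of $K$ afforded by the perfect elimination ordering, and match it term-by-term against the inductive count of $|\mathcal S_K|$.
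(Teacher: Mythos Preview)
Your reduction argument (start from the $K^0$ basis of Proposition~\ref{prop: basis of pi_1 for K^0}, push it forward along $G_1\ast\cdots\ast G_n\twoheadrightarrow\prod_{K^1}G_i$, then use the edge relations together with the Hall identities to kill commutators whose last two indices lie in the same component of $K|_V$ and to normalize $i$ to the smallest vertex of its component) is exactly what the paper does. The paper's discussion following the proposition gives precisely this heuristic: edges introduce relations $[g_i,g_j]=1$, a path from $i$ to $j$ inside $V=\{k_1,\dots,k_l,j,i\}$ lets one shorten the iterated commutator, and two commutators whose last index lies in the same component are interexpressible.

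Where you diverge is in establishing that the pruned set $\mathcal S_K$ is not merely generating but a \emph{basis}. You propose an explicit count, matching $|\mathcal S_K|$ against $\rho_K$ via a perfect-elimination induction or an Euler-characteristic computation on the finite graph $(\underline{EG},\underline{G})^K$. The paper does not do this: it simply refers the reader to \cite[Theorem~4.5]{panov2016polyhedral}, where the full argument is carried out for right-angled Coxeter groups ($G_i=\Z_2$), and asserts that the same proof goes through for arbitrary finite $G_i$. Your route is more self-contained and would make the paper independent of that reference, at the cost of actually executing the combinatorial bookkeeping you flag as ``the hard part''; the paper's route is shorter but outsources the key minimality step. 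Either is legitimate, and your Euler-characteristic idea is in fact the cleanest way to get $\rho_K$ when no closed formula like (\ref{eqn: rank of kernel for chordal graph}) is available.
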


When we start introducing edges in $K^0$, then we start introducing commutator relations
$[g_i,g_j]$ whenever $\{i,j\}$ is an edge. In the iterated commutator $[g_{k_1},\dots,[g_{k_l},[g_j,g_i]]\dots]$
if $i,j$ are in the same connected component of $K$ restricted to $\{k_1,\dots,k_{l},j,i\},$ 
then there is a path from $i$ to $j$ with coordinates from $\{k_1,\dots,k_{l}\},$ hence
we can consider the iterated commutator induced by these vertices. Using
relations from the edges we can reduce this commutator to another commutator
of shorter length etc. Thus we can choose $i,j$ to be in different path components.
If we have two commutators where the last coordinate is in the same
component, one can show that we can write one in terms of the other.
Hence, we choose the smallest between them.
We leave it to the reader to check that the detailed arguments
in the proof of \cite[Theorem 4.5]{panov2016polyhedral} 
work also for any selection of finite groups.

\begin{ex}
Let us consider an example with the symmetric group on 3 letters.
Let $G_1=\Sigma_3:=\langle s,t | s^2=t^2, (st)^3\rangle=\{1,x_1,x_2,x_3,x_4,x_5\}$,
$G_2=\Z_2=\{1,y\}$, $G_3=\Z_3=\{1,z,z^2\},$ and $K''=\{\{1,2\},\{3\}\}$ in 
Figure \ref{fig: K and K'}. Then the fibre of the fibration
has fundamental group the free group $F_{22}$ with $\mathcal S$ given by
\begin{align*}
\mathcal S = \{	& [z,x_1] , [z,x_2], [z,x_3], [z,x_4],[z,x_5], 
			[z^2,x_1] , [z^2,x_2],[z^2,x_3],[z^2,x_4],[z^2,x_5], \\
			& [z,y] , [z^2,y] , [x_1,[z,y]] ,[x_2,[z,y]],  [x_3,[z,y]],  [x_4,[z,y]],  [x_5,[z,y]] \\
			& [x_1,[z^2,y]] ,[x_2,[z^2,y]],  [x_3,[z^2,y]],  [x_4,[z^2,y]],  [x_5,[z^2,y]]\}.
\end{align*}
Note that the structure of the symmetric group $\Sigma_3$ 
was not needed to write the generating set $\mathcal S$. 
Therefore, if we replace $\Sigma_3$ with the cyclic group of
order six $C_6$, then the corresponding generating set $\mathcal S$ has the 
same number and types of generators where in the commutators in $S$ we 
replace the elements of the symmetric group with those of the cyclic group.
\end{ex}

\section{Examples of monodromy representations}\label{section: e.g.}

Let $G_1,\dots,G_n$ be finite discrete groups and $K$ be a flag complex with $K^1$ 
a chordal graph. Consider the following commutative diagram 
\begin{equation}\label{fig: comm diag. for Aut and Out}
\begin{tikzcd}
 1 \arrow{r} & F_{\rho_K} \arrow{r} \arrow{d}{\Psi = \rm iso} & 
 								\prod_{K^1} G_i \arrow{r} \arrow{d}{\Theta_K} 
 	& \prod_{i} G_i\arrow{r} \arrow{d}{\Phi_{K}} & 1 \\
 1 \arrow{r} & \Inn(F_{\rho_K}) \arrow{r} &\Aut(F_{\rho_K}) \arrow{r} & 
 								\Out(F_{\rho_K}) \arrow{r} & 1,
\end{tikzcd}
\end{equation}
where $\Theta(g)(h)=ghg^{-1}$ and $\Psi(g)(h)=ghg^{-1}$. We are interested
in describing the maps $\Theta_{K}$ and $\Phi_{K}.$

For examples concerning only two finite groups, i.e. $n=2$, see \cite{stafa.monodromy}, 
where explicit answers are given. 
We can explicitly describe faithful representations (eg. by using \verb|Magma|)
$$\Phi_K: G_1\times \cdots \times G_n \to \SL({\rho_K},\Z),$$
where $G_1, \dots , G_n$ are finite abelian groups and $n\geq 3$. 
In general, if $G_i$ are any finite groups (not necessarily abelian), 
we obtain faithful representations of graph products of finite groups
$$ \Theta_K: \prod_{K^1} G_i \to \Aut(F_{\rho_K})$$
as well as faithful monodromy representations of direct products of finite groups
$$ \Phi_K: G_1\times \cdots \times G_n \to \GL({\rho_K},\Z)$$
as will be shown below.
These include many interesting classes of discrete groups, such as right-angled 
Coxeter groups. If one of the groups is infinite discrete, then additional
examples include hyperbolic groups as described in \cite[Theorem 5.1]{holt2012generalising}, 
braid groups, right-angled Artin groups and more. 
Thus such representations can be realized as monodromy representations.

The rank $\rho_K$ increases {very fast} (\ref{eqn: rank of kernel for chordal graph}) 
if we increase the order and the number of the groups in consideration. 
We concentrate on a couple of examples including right-angled Coxeter groups. 
In addition we select the
simplicial complexes $K$ and $K'$ in Figure \ref{fig: K and K'}, 
to keep the rank of the free group small.
It is certainly possible to obtain many more explicit examples, which we leave to the interested reader.
However, note that the basis generated in \verb|Magma| 
is different (yet equivalent) from the basis we describe in Section 
\ref{sec: commutator subgp}. To do the following examples it suffices to have
Theorem \ref{thm: fibre is a graph if K^1 is chordal} and the basis generated by
\verb|Magma|, but we need Propositions \ref{prop: basis of pi_1 for K^0}
and \ref{prop: basis of pi_1 for K=flag} to have an explicit basis in general.
\begin{figure}[ht!]
\begin{tikzpicture}
[scale=1, vertices/.style={draw, fill=black, circle, inner sep=0.5pt}]
\node[vertices, label=below:{{\footnotesize$$1$$}}] (f) at (0,0) {};
\node[vertices, label=below:{{\footnotesize$$2$$}}] (e) at (0.5,0) {};
\node[vertices, label=below:{{\footnotesize$$3$$}}] (g) at (1,0) {};

\node[vertices, label=below:{{\footnotesize$$1$$}}] (a) at (3,0) {};
\node[vertices, label=below:{{\footnotesize$$2$$}}] (b) at (3.5,0) {};
\node[vertices, label=below:{{\footnotesize$$3$$}}] (c) at (4,0) {};
\node[vertices, label=below:{{\footnotesize$$4$$}}] (d) at (4.5,0) {};

\node[vertices, label=below:{{\footnotesize$$1$$}}] (k) at (6,0) {};
\node[vertices, label=below:{{\footnotesize$$2$$}}] (l) at (6.5,0) {};
\node[vertices, label=below:{{\footnotesize$$3$$}}] (m) at (7,0) {};
\foreach \to/\from in {a/b,b/c,c/d}
\draw [-] (\to)--(\from);

\foreach \to/\from in {k/l}
\draw [-] (\to)--(\from);
\end{tikzpicture}
\caption{$K,\,\,K',$ and $K''$}
\label{fig: K and K'}
\end{figure}
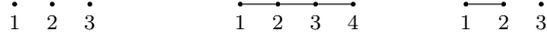

\begin{ex}\label{ex: 1}
Consider three groups of order 2 and the following short exact sequence obtained from
the fibration sequence (\ref{eqn: D-S-fibration})
$$
1 \to F_5 \to \Z_2\ast \Z_2, \ast \Z_2 \to \Z_2 \times \Z_2, \times \Z_2 \to 1,
$$
corresponding to the simplicial complex $K$ in Figure \ref{fig: K and K'},
where each of the cyclic groups is generated by $a$, $b$ and $c$, respectively. 
Recall that the rank of the fibre is given by equation 
(\ref{eqn: rank of kernel for chordal graph}) and in this case is 5.
Then $ F_5$ has a generating set 
(thus a presentation) given by
$$
F_5:=\langle (ba)^2,(ca)^2,(cb)^2,acbcba,bcacba \rangle=\langle x_1,x_2,x_3,x_4,x_5\rangle.
$$
The action of $\Z_2 \times \Z_2, \times \Z_2$ on $F_5$ is determined by the following:
\begin{align*}
a\cdot x_i = \begin{cases}
x_1^{-1}		\\
x_2^{-1} 		\\
x_4			\\
x_3			\\
x_5^{-1}		
\end{cases}
b\cdot x_i = \begin{cases}
x_1^{-1}			\\
x_5 x_1^{-1} 		\\
x_3^{-1}			\\
x_1 x_4^{-1} x_1^{-1}	\\
x_2 x_1^{-1}		
\end{cases}
c\cdot x_i = \begin{cases}
x_3 x_5 x_4^{-1} x_2^{-1}		& \text{if } i=1\\
x_2^{-1} 					& \text{if } i=2\\
x_3^{-1}					& \text{if } i=3\\
x_2 x_4^{-1} x_2^{-1}			& \text{if } i=4\\
x_3 x_1 x_4^{-1} x_2^{-1}		& \text{if } i=5,
\end{cases}
\end{align*}
where $a,b,c$ act by conjugation. This gives also a faithful representation of 
the right-angled Coxeter group to the automorphism group $\Aut(F_5)$.
It is straightforward to check that the induced action on 
the abelianization $\Z^5$ gives a faithful representation of 
the right-angled Coxeter group to the special linear group over the integers:
\begin{align*}
\Phi_K: \Z_2 \times \Z_2, \times \Z_2 & \to \SL(5,\Z)\\
			a,b,c &\mapsto X,Y,Z,
\end{align*}
where $X,Y,Z$ are the following matrices, respectively:
{{{{ \small
\begin{align*}
\left(\begin{array}{rrrrr}
-1 & 0  & 0 & 0 & 0 \\
0  & -1 & 0 & 0 & 0 \\
0  & 0  & 0 & 1 & 0 \\
0  & 0  & 1 & 0 & 0 \\
0  & 0  & 0 & 0 &-1 \\
\end{array}\right), 
\left(\begin{array}{rrrrr}
-1  & 0  & 0  & 0  & 0 \\
-1  & 0  & 0  & 0  & 1 \\
0   & 0  & -1 & 0  & 0 \\
0   & 0  & 0  & -1 & 0 \\
-1  & 1  & 0  & 0  & 0 \\
\end{array}\right), 
\left(\begin{array}{rrrrr}
0  & -1  &  1  &  -1 & 1 \\
0  & -1  &  0  &  0  & 0 \\
0  &  0  & -1  &  0  & 0 \\
0  &  0  &  0  & -1  & 0 \\
1  & -1  &  1  & -1  & 0 \\
\end{array}\right).
\end{align*}
}}}}

Note that the generators of the fundamental group of 
the polyhedral product $({E\Z_2},{\Z_2})^K$ 
can be described using the loops in Figure \ref{fig: the loops}
sitting in the space $({I},{F})^K$.

\begin{figure}[ht!]
\begin{tikzpicture}[scale=.5]
\draw[line width=0.3mm](0,0)--(1,1);
\draw[-,line width=0.3mm](1,1)--(3,1);
\draw[line width=0.3mm](3,1)--(2,0);
\draw[line width=0.3mm](2,0)--(0,0);
\draw[dotted](0,0)--(0,2);
\draw[dotted](0,2)--(1,3);
\draw[dotted](1,3)--(3,3);
\draw[dotted](3,3)--(2,2);
\draw[dotted](2,2)--(0,2);
\draw[-,dotted](1,1)--(1,3);
\draw[dotted](2,2)--(2,0);
\draw[dotted](3,3)--(3,1);
\end{tikzpicture}
\hspace{.2in}
\begin{tikzpicture}[scale=.5]
\draw[dotted](0,0)--(1,1);
\draw[-,dotted](1,1)--(3,1);
\draw[dotted](3,1)--(2,0);
\draw[dotted](1,1)--(1,3);
\draw[line width=0.3mm](2,0)--(0,0);
\draw[line width=0.3mm](0,0)--(0,2);
\draw[dotted](0,2)--(1,3);
\draw[dotted](1,3)--(3,3);
\draw[dotted](3,3)--(2,2);
\draw[line width=0.3mm](2,2)--(0,2);
\draw[line width=0.3mm](2,2)--(2,0);
\draw[dotted](3,3)--(3,1);
\end{tikzpicture}
\hspace{.2in}
\begin{tikzpicture}[scale=.5]
\draw[line width=0.3mm](0,0)--(1,1);
\draw[line width=0.3mm](1,1)--(1,3);
\draw[-,dotted](1,1)--(3,1);
\draw[dotted](3,1)--(2,0);
\draw[dotted](2,0)--(0,0);
\draw[line width=0.3mm](0,0)--(0,2);
\draw[line width=0.3mm](0,2)--(1,3);
\draw[dotted](1,3)--(3,3);
\draw[dotted](3,3)--(2,2);
\draw[dotted](2,2)--(0,2);
\draw[dotted](2,2)--(2,0);
\draw[dotted](3,3)--(3,1);
\end{tikzpicture}
\hspace{.2in}
\begin{tikzpicture}[scale=.5]
\draw[line width=0.3mm](0,0)--(1,1);
\draw[-,line width=0.3mm](1,1)--(3,1);
\draw[dotted](3,1)--(2,0);
\draw[line width=0.3mm](2,0)--(0,0);
\draw[dotted](0,0)--(0,2);
\draw[dotted](0,2)--(1,3);
\draw[dotted](1,3)--(3,3);
\draw[line width=0.3mm](3,3)--(2,2);
\draw[dotted](2,2)--(0,2);
\draw[-,dotted](1,1)--(1,3);
\draw[line width=0.3mm](2,2)--(2,0);
\draw[line width=0.3mm](3,3)--(3,1);
\end{tikzpicture}
\hspace{.2in}
\begin{tikzpicture}[scale=.5]
\draw[line width=0.3mm](0,0)--(1,1);
\draw[-,line width=0.3mm](1,1)--(1,3);
\draw[-,dotted](1,1)--(3,1);
\draw[line width=0.3mm](3,1)--(2,0);
\draw[line width=0.3mm](2,0)--(0,0);
\draw[dotted](0,0)--(0,2);
\draw[dotted](0,2)--(1,3);
\draw[line width=0.3mm](1,3)--(3,3);
\draw[dotted](3,3)--(2,2);
\draw[dotted](2,2)--(0,2);
\draw[dotted](2,2)--(2,0);
\draw[line width=0.3mm](3,3)--(3,1);
\end{tikzpicture}
\caption{The generators $x_1,\,x_2,\,x_3,\,x_4,$ and $x_5$, respectively}\label{fig:t3}
\label{fig: the loops}
\end{figure}
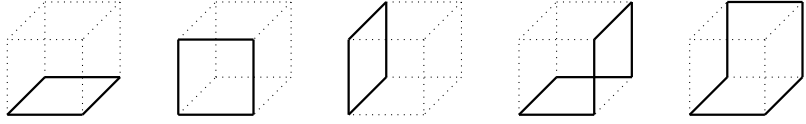
\end{ex}

\begin{ex}\label{ex: 2}
Now we consider four cyclic groups. 
Construct the right-angled Coxeter group over 
the simplicial complex $K'$ given in Figure \ref{fig: K and K'}. 
Then equation (\ref{eqn: D-S-fibration}) gives the following short 
exact sequence of groups
$$
1 \to F_{5} \to \prod_{K'}\Z_2   \to   \Z_2^4 \to 1,
$$
where $F_{5} = \langle x_1,\dots,x_{5}\rangle:=\langle     
    (ca)^2,
    (da)^2,
    (db)^2,
    a d b d b a,
    c d a d c a,
    \rangle.$ 
The conjugation action is then described as follows:
\begin{align*}
a\cdot x_i = \begin{cases}
x_1^{-1}		\\
x_2^{-1} 		\\
x_4			\\
x_3			\\
x_5^{-1}		
\end{cases}
b\cdot x_i = \begin{cases}
x_1			\\
x_3^{-1}x_2 x_4 	\\
x_3^{-1}		\\
x_4^{-1}		\\
x_3^{-1} x_5 x_4		
\end{cases} \, 
c\cdot x_i = \begin{cases}
x_1^{-1}		\\
x_5 x_1^{-1} 	\\
x_3			\\
x_1 x_4 x_1^{-1}	\\
x_2 x_1^{-1}		
\end{cases}\!\!
d\cdot x_i = \begin{cases}
x_5 x_2^{-1}		\\
x_2^{-1} 			\\
x_3^{-1}			\\
x_2 x_4^{-1} x_2^{-1}	\\
x_1 x_2^{-1}		
\end{cases}
\end{align*}
for all values of $i=1,2,3,4,5,$ respectively. 
Then there is a representation of the right-angled Coxeter group 
$\prod_{K'}\Z_2$ into the automorphism group $\Aut(F_5)$.
$$ \prod_{K'}\Z_2 \hookrightarrow \Aut(F_5) .$$
The induced action on the abelianization of the free group gives a
faithful representation
\begin{align*}
\Phi_{K'}: \Z_2^4 & \to \SL(5,\Z)\\
			a,b,c,d &\mapsto A,B,C,D
\end{align*}
where $A,B,C,D$ are the following matrices, respectively:
{{{{{ \small
\begin{align*}
A=&\left(\begin{array}{rrrrr}
1 & 0 & 0 & 0 & 0 \\ 
0 & 1 & -1 & 1 & 0 \\ 
0 & 0 & -1 & 0 & 0 \\ 
0 & 0 & 0 &-1 & 0 \\ 
0 & 0 & -1 & 1 & 1 \\ 
\end{array}\right), 
B=\left(\begin{array}{rrrrr}
-1 & 0 & 0 & 0 & 0 \\ 
0 & -1 & 0 & 0 & 0 \\ 
0 & 0 & 0 & 1 & 0 \\ 
0 & 0 & 1 &0 & 0 \\ 
0 & 0 & 0 & 0 & -1 \\ 
\end{array}\right), \\
C=&\left(\begin{array}{rrrrr}
-1 & 0 & 0 & 0 & 0 \\ 
-1 & 0 & 0 & 0 & 1 \\ 
0 & 0 & 1 & 0 & 0 \\ 
0 & 0 & 0 &1 & 0 \\ 
-1 & 1 & 0 & 0 & 0 \\ 
\end{array}\right), 
D=\left(\begin{array}{rrrrr}
0 & -1 & 0 & 0 & 1 \\ 
0 & -1 & 0 & 0 & 0 \\ 
0 & 0 & -1 & 0 & 0 \\ 
0 & 0 & 0 &-1 & 0 \\ 
1 & -1 & 0 & 0 & 0 \\ 
\end{array}\right).
\end{align*}
}}}}}
\end{ex}

One can start with any finite groups $G_1,\dots , G_n$ with given 
presentations and any simplicial complex $K$ with on $n$ vertices 
such that $K^1$ is a chordal graph. If either of the
groups is not abelian, then the representations obtained
in the abelianization may not have images in $\SL(\rho_K,\Z)$, 
but rather in $\GL(\rho_K,\Z)$ as shown in \cite[Example 2]{stafa.fund.gp}.

\section{Graph products of abelian groups}\label{sec: graph products abelian gps}

Every finite abelian group can be written as a 
finite direct sum of finite cyclic subgroups
with order a power of a prime. Here we will 
describe how to think of a graph product of 
finite abelian groups over $K$ as a graph product of 
cyclic groups over a new simplicial complex $K_{\underline{G}}$, 
at the expense of having more vertices in the simplicial complex.

In \cite[Theorem 2.2]{stafa.monodromy} it was 
shown that two cyclic subgroups yield a faithful
monodromy representation 
$$C_n\times C_m \to \Out(F_{\rho_K})$$ 
and a faithful representation 
$$\Phi_K: C_n\times C_m \to \SL({\rho_K},\Z),$$ 
where $K=\{\{1\},\{2\}\}$. 

\

Consider two finite abelian groups $G,H$.
Then we can write them as direct sums if cyclic groups
$$G\cong \bigoplus_{i\in I} C_{n_i},\,\,\,\, H \cong \bigoplus_{j\in J} C_{m_j}.$$
We can then replace the simplicial complex $K=\{\{1\},\{2\}\}$ 
by the union of two simplices 
$$K'=\Delta[|I|-1] \sqcup \Delta[|J|-1].$$ 
This does not change the monodromy
representation $G\times H \to \Out(F_{\rho_K})$, because the short exact sequences
of groups
$$
1 \to F_{\rho_K} \to G \ast H   \to   G \times H \to 1,
$$
and
$$
1 \to F_{\rho_{K'}} \to \prod_{(K')^1} C_{n_i,m_j}   \to   \prod_{i,j} C_{n_i,m_j} \to 1,
$$
are equivalent; note that $\prod_{i,j} C_{n_i,m_j} \cong G\times H$,
and 
$$\prod_{(K')^1} C_{n_i,m_j}\cong \prod_{(\Delta[|I|-1])^1}C_{n_i} 
\ast \prod_{(\Delta[|J|-1])^1}C_{m_j}\cong G\ast H.$$

More generally, if $K$ is a flag complex with $K^1$ a chordal graph, 
and $G_1,\dots,G_n$ are finite abelian,  write the 
direct sum decomposition of each group $G_i$ 
$$G_i\cong \bigoplus_{j\in J_i} C_{n_j^i}.$$
Replace each vertex $i$ on the chordal graph $K^1$
by the simplex $\Delta[|J_i|-1]$. Give unique names to all vertices
in all these different simplices. Note that, if $\{i,j\}$ is an edge in $K$,
and $\Delta[|J_i|-1]=\{\{v_0^i,\cdots,v_{|J_i|-1}^i\}\}$, then we need 
to add $\{v_k^i,v_t^j\}$ to the new simplicial complex for all $k,t$
since the $G_i,G_j$ commute if and only if all their subgroups commute. 
We can now define the following simplicial complex. 

\begin{defn}
Let $\underline{G}:=\{G_1,\dots,G_n\}$ be finite abelian groups. 
Let $K$ be a simplicial complex on $n$ vertices with 1-skeleton
$K^1$ a chordal graph. Define the \textbf{simplicial complex $K_{\underline{G}}$} 
to be the flag complex obtained from $K$ by the following procedure: 
replace each vertex $i$ of $K$ with the full simplex 
$\Delta[|J_i|-1]=\{\{v_0^i,\cdots,v_{|J_i|-1}^i\}\}$,
add an edge between the vertices in $\Delta[|J_k|-1]$ and the vertices
in $\Delta[|J_l|-1]$ if $G_k$ and $G_l$ commute in the graph product
$\prod_{K^1} G_i$, and take the
corresponding clique complex of the 1-skeleton 
of this new simplicial complex. 
\end{defn}

We then have the following lemma.

\begin{lemma}\label{lem: new flag complex from given K}
With the same assumptions, the graph $(K_{\underline{G}})^1$ is chordal. 
\end{lemma}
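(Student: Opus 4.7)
The plan is to argue by contradiction, starting from a chordless cycle of length at least four in $(K_{\underline{G}})^1$ and using the ``blow-up'' structure to project down to $K^1$, where we exploit chordality. The key feature to leverage is that two vertices in the same blown-up simplex $\Delta[|J_i|-1]$ are always adjacent, and moreover any vertex in $\Delta[|J_k|-1]$ is adjacent to any vertex in $\Delta[|J_l|-1]$ whenever $\{k,l\}$ is an edge of $K^1$; so edges of $(K_{\underline{G}})^1$ are controlled entirely by the combinatorics of $K^1$ together with which clique each vertex belongs to.

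Suppose $C = (u_1, u_2, \ldots, u_m, u_1)$ is an induced (chordless) cycle in $(K_{\underline{G}})^1$ with $m \geq 4$. For each $t$, let $i_t \in \{1,\ldots,n\}$ be the index so that $u_t \in \Delta[|J_{i_t}|-1]$. The first step is to show that the indices $i_1, \ldots, i_m$ are pairwise distinct. Indeed, if $i_s = i_t$ for two vertices non-consecutive in $C$, then $u_s u_t$ is an edge of the corresponding clique, yielding a chord — contradiction. If instead $i_s = i_{s+1}$ for two cyclically consecutive vertices, I would show that the neighbors $u_{s-1}$ and $u_{s+2}$ force a chord: whether $u_{s-1}$ lies in the same clique as $u_s$ or in a clique $\Delta[|J_k|-1]$ with $\{k,i_s\}$ an edge of $K^1$, in either case $u_{s-1}$ is adjacent to $u_{s+1}$ in $(K_{\underline{G}})^1$, producing a chord (since $m \geq 4$ makes $u_{s-1}$ and $u_{s+1}$ non-consecutive). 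Iterating this observation rules out all coincidences.

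The second step is the projection. Once the $i_t$ are pairwise distinct, the sequence $(i_1, i_2, \ldots, i_m, i_1)$ is a cycle of length $m \geq 4$ in $K^1$: consecutive entries are distinct, and consecutive $u_t, u_{t+1}$ are adjacent in $(K_{\underline{G}})^1$ by vertices of distinct cliques, which by the construction forces $\{i_t, i_{t+1}\}$ to be an edge of $K^1$. Since $K^1$ is chordal by hypothesis, this cycle admits a chord, say $\{i_s, i_t\}$ with $s,t$ non-consecutive on the cycle. Then, by the definition of $(K_{\underline{G}})^1$, every pair of vertices in $\Delta[|J_{i_s}|-1] \times \Delta[|J_{i_t}|-1]$ is joined by an edge, in particular $u_s u_t$; this is a chord of $C$, contradicting the assumption that $C$ was induced.

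I expect no serious obstacle here; the only mildly delicate point is the case analysis ensuring that no two vertices of $C$ (consecutive or not) land in the same blown-up simplex, as this is what allows the projection to $K^1$ to produce a genuine (hence chord-possessing) cycle rather than a degenerate closed walk. Once that is clean, chordality of $K^1$ does the rest in one step.
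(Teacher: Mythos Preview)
Your proof is correct and follows essentially the same approach as the paper: both argue that a chordless cycle in $(K_{\underline{G}})^1$ must have its vertices in pairwise distinct blown-up simplices (else the clique structure supplies a chord), and then project to a cycle in $K^1$ where chordality yields a chord that lifts back. Your treatment is in fact more carefully organized---handling all cycle lengths uniformly via the ``all $i_t$ distinct'' reduction---whereas the paper sketches the length-$4$ case by cases and then asserts the general case follows similarly.
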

\begin{proof}
This follows from the definition: 
The 1-skeleton of $\Delta[|J_i|-1]$ and $K^1$ are chordal graphs.
If $c$ is a cycle of length greater than 3, then its edges are either
all in $(\Delta[|J_i|-1])^1$ for some $i$ or it moves between 
various 1-skeleta $(\Delta[|J_k|-1])^1$. Suppose $c$ has length 4.
If vertices of $c$ are all in a single $(\Delta[|J_i|-1])^1$ we are done.
If vertices of $c$ lie in two distinct $(\Delta[|J_i|-1])^1$'s, 
then there is one edge between $(\Delta[|J_k|-1])^1$ and
$(\Delta[|J_l|-1])^1$, thus there is an edge between all the vertices
between these two simplices, in particular between nonconsecutive vertices. 
If vertices of $c$ lie in three distinct $(\Delta[|J_i|-1])^1$'s, 
the same argument holds. 
If vertices of $c$ lie in four distinct $(\Delta[|J_i|-1])^1$'s, then 
$c$ is a replica of a cycle in $K$.
The same arguments show the triangulation of longer cycles $c$ .
\end{proof}

\begin{thm}
Let $G_1,\dots, G_n$ be finite abelian groups and $K^1$ a chordal graph. 
Then the faithful monodromy representation 
$G_1\times\cdots \times G_n \to \Out(F_{\rho_K})$ 
induces a faithful representation
$$\Phi_K: G_1\times\cdots \times G_n \to \SL({\rho_K},\Z).$$
\end{thm}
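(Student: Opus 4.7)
The strategy is to post-compose the faithful monodromy representation $\Phi_K \colon G_1 \times \cdots \times G_n \to \Out(F_{\rho_K})$ from the first theorem of the introduction with the canonical homomorphism $\Out(F_{\rho_K}) \to \GL(\rho_K, \Z)$ induced by abelianizing $F_{\rho_K}$. Since inner automorphisms of a free group act trivially on its abelianization $F_{\rho_K}^{\mathrm{ab}} \cong \Z^{\rho_K}$, this yields a well-defined representation $\bar{\Phi}_K \colon G_1 \times \cdots \times G_n \to \GL(\rho_K, \Z)$, and what remains is to verify that (i) $\bar{\Phi}_K$ is still faithful and (ii) its image lies in $\SL(\rho_K, \Z)$ when the $G_i$ are abelian.

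First I would reduce to the case where each $G_i$ is cyclic. Decomposing each finite abelian $G_i$ into a direct sum of cyclic subgroups and invoking Lemma~\ref{lem: new flag complex from given K}, the new flag complex $K_{\underline{G}}$ has chordal $1$-skeleton, and the short exact sequences $1 \to F_{\rho_K} \to \prod_{K^1} G_i \to \prod_i G_i \to 1$ obtained from $K$ and from $K_{\underline{G}}$ are naturally isomorphic. In particular $\rho_K = \rho_{K_{\underline{G}}}$ and the abelianized monodromy actions coincide, so without loss of generality $G_i = \langle g_i \rangle \cong C_{n_i}$. I would then compute the matrix of $\bar{\Phi}_K(g_i)$ on the commutator basis of Proposition~\ref{prop: basis of pi_1 for K=flag}. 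Repeated application of the Hall--Witt identities (\ref{eqn: Hall-Witt}) and (\ref{eqn: Hall-Witt derived}) reduces $g_i \cdot [\cdots] \cdot g_i^{-1}$, modulo $[F_{\rho_K}, F_{\rho_K}]$, to a $\Z$-linear combination of the basis iterated commutators. On each block of $F_{\rho_K}^{\mathrm{ab}}$ indexed by a fixed tuple $(k_1 < \cdots < k_l < j)$ from Proposition~\ref{prop: basis of pi_1 for K=flag}, the action is a tensor product of the form $M_{g_i} \otimes \mathrm{Id} \otimes \cdots$, where $M_{g_i}$ is the matrix of left multiplication by $g_i$ on the augmentation ideal $I_{G_i} \subset \Z[G_i]$, an explicit companion-like integer matrix with $\det M_{g_i} = (-1)^{n_i - 1}$.

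Faithfulness of $\bar{\Phi}_K$ then follows from this tensor description: if $(g_1^{a_1}, \ldots, g_n^{a_n})$ is non-trivial, then some $a_i \not\equiv 0 \pmod{n_i}$ and $M_{g_i^{a_i}}$ is non-trivial on $I_{G_i}$, so restricting $\bar{\Phi}_K$ to an appropriate block produces a non-identity matrix. For the $\SL$ conclusion, $\det \bar{\Phi}_K(g_i)$ is a product of copies of $(-1)^{n_i - 1}$, one for each basis commutator in which $g_i$ occupies the ``cycled'' slot; a parity count combining the rank formula (\ref{eqn: rank of kernel for chordal graph}) with the combinatorics of Proposition~\ref{prop: basis of pi_1 for K=flag} shows the total exponent is even under the abelian hypothesis, so $\det \bar{\Phi}_K(g_i) = +1$. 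The main obstacle is precisely this last parity identity: counting the basis commutators contributing each sign requires a careful case analysis of the index patterns $k_1 < \cdots < k_l < j$, and is where chordality of $K^1$ together with abelianness of the $G_i$'s enter decisively (the non-abelian version picks up extra signs from commutator twisting, and only the $\GL$ conclusion survives).
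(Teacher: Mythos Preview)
Your reduction to cyclic factors via the complex $K_{\underline{G}}$ and Lemma~\ref{lem: new flag complex from given K} is exactly what the paper does, including the identification $\rho_K=\rho_{K_{\underline{G}}}$. The two arguments diverge after that point. The paper computes nothing further: once every $G_i$ is cyclic it simply invokes \cite[Theorem~2.2]{stafa.monodromy}, the author's earlier result that for cyclic factors the induced representation is faithful and lands in $\SL$. The present proof is thus a reduction followed by a citation.

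Your route is more ambitious---a direct verification via a block/tensor description of the abelianized conjugation action on the commutator basis of Proposition~\ref{prop: basis of pi_1 for K=flag}---but it has a gap at the decisive step. First, the clean tensor form $M_{g_i}\otimes\mathrm{Id}\otimes\cdots$ is asserted rather than proved: conjugating an iterated commutator in which the index $i$ already appears does not merely rotate one slot, since the Hall identities (\ref{eqn: Hall-Witt}) and (\ref{eqn: Hall-Witt derived}) introduce lower-degree correction terms that mix blocks indexed by different tuples $(k_1,\dots,k_l,j)$; the block structure therefore needs justification you do not supply. Second, and more seriously, you yourself flag the $\SL$ parity count as the ``main obstacle'' and do not carry it out. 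Since that count is precisely the content of the theorem beyond faithfulness into $\GL$, the proposal is incomplete exactly where the work lies. The paper sidesteps both issues by delegating them to the prior reference.
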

\begin{proof}
By Lemma \ref{lem: new flag complex from given K}, 
the graph $(K_{\underline{G}})^1$  is chordal and 
by definition $K_{\underline{G}}$ is a flag complex. Therefore 
the spaces in fibration (\ref{eqn: D-S-fibration}) 
are Eilenberg-MacLane spaces. Furthermore, the monodromy representation 
$$\Phi_K:  G_1\times\cdots \times G_n \to \Out(F_{\rho_K})$$
is equivalent to the monodromy representation 
$$\Phi_{K_{\underline{G}}}:  G_1\times\cdots \times G_n \to \Out(F_{\rho_{K_{\underline{G}}}}),$$
where $F_{\rho_{K_{\underline{G}}}}=F_{\rho_{K}}$
and we rewrite  $$G_i\cong \bigoplus_{j\in J_i} C_{n_j^i}.$$
Each element in $G_i$ lies in
a cyclic group, which by \cite[Theorem 2.2]{stafa.monodromy} maps
faithfully into $\SL(\rho_K,\Z).$
Since $G_1\times\cdots \times G_n$ is abelian the theorem follows.
\end{proof}

\begin{cor}
If $K^1$ is a chordal graph, then there is a faithful 
representation of the graph product $\prod_{K^1} G_i$
into the automorphism group $\Aut(F_{\rho_K})$ of the 
free groups of rank ${\rho_K}$.
In particular, this is true for any right-angled Coxeter group.
\end{cor}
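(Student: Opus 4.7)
The plan is to derive faithfulness of $\Theta_K$ by a diagram chase in the commutative diagram with exact rows at the start of Section \ref{section: e.g.}. Concretely, if one knows that the outer vertical maps $\Psi$ and $\Phi_K$ are injective, then any $w \in \prod_{K^1} G_i$ with $\Theta_K(w) = 1$ has trivial image in $\prod_i G_i$ (by injectivity of $\Phi_K$), so $w \in F_{\rho_K}$; but then $\Psi(w) = \Theta_K(w) = 1$, and injectivity of $\Psi$ gives $w = 1$. Thus the whole argument boils down to verifying the two outer injectivity statements and then chasing.

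First I would verify that $\Psi \colon F_{\rho_K} \to \Inn(F_{\rho_K})$ is an isomorphism: it is the canonical surjection whose kernel is the center of $F_{\rho_K}$, which vanishes whenever $\rho_K \geq 2$ (the low-rank cases being vacuous). Next I would invoke the preceding theorem, which gives faithfulness of $\Phi_K$ when all $G_i$ are abelian. For general finite $G_i$, I would reduce to the abelian case by restricting to cyclic subgroups: given a non-trivial tuple $g = (g_1,\dots,g_n)$ with $g_i \neq 1$, consider the inclusion $\langle g_i \rangle \hookrightarrow G_i$ and exploit the fact that both $\rho_K$ and the commutator-basis description of Proposition \ref{prop: basis of pi_1 for K=flag} depend only on the orders $|G_j|$ (as emphasized in Section \ref{sec: commutator subgp}); this allows one to detect $\Phi_K(g)$ via an abelian subcase where the preceding theorem applies.

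The main obstacle is making the reduction to cyclic subgroups rigorous: one needs to show that a non-trivial element $g_i$ of a possibly non-abelian $G_i$ acts non-trivially on some element of $F_{\rho_K}$, and the natural route is to pass to the abelianization of $F_{\rho_K}$ and compare with the corresponding abelian-group computation, using functoriality of abelianization with respect to the conjugation action of $\prod_i G_i$. The final sentence of the corollary is then immediate: every right-angled Coxeter group has the form $\prod_{K^1} \Z/2$, all factors are abelian, and the preceding theorem yields $\Phi_K$ faithful into $\SL(\rho_K,\Z) \subset \Out(F_{\rho_K})$, so the diagram chase produces the desired faithful representation into $\Aut(F_{\rho_K})$.
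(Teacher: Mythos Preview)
Your diagram-chase argument is correct and is exactly the paper's intended proof: the corollary is stated without proof immediately after the theorem establishing that $\Phi_K$ is faithful for abelian $G_i$, and the five-lemma reasoning on the commutative diagram at the start of Section~\ref{section: e.g.} (with $\Psi$ an isomorphism since $\rho_K\geq 2$) is precisely what makes it a corollary.

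One contextual point: this corollary lives in Section~\ref{sec: graph products abelian gps}, whose standing hypothesis is that the $G_i$ are finite \emph{abelian}, so your attempted extension to non-abelian $G_i$ is not required here. As you yourself flag, that reduction via cyclic subgroups is not straightforward --- shrinking a factor $G_i$ to $\langle g_i\rangle$ changes the ambient graph product and hence the kernel, so one cannot directly compare the two $\Out$-actions. The paper announces the non-abelian case in the introduction but does not supply a separate detailed argument for it in the body either, so you are not missing a trick that the paper provides.
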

\begin{proof}
This follows by considering the commutative diagram (\ref{fig: comm diag. for Aut and Out}) since the left vertical map is an isomorphism and the right vertical map is an injection.
\end{proof}

Recall that there is a short exact sequence of groups
$$
1 \to \IA_{N} \to \Aut(F_{N}) \to \GL(N,\Z) \to 1
$$
induced by the abelianization of the automorphisms of free groups, 
that is the induced map on the first homology $H_1(F_{N})$. 
The group $\IA_{N}$ is the analogue of the Torelli group in 
mapping class groups of surfaces.
Then we have the following immediate corollary.

\begin{cor}
If $K^1$ is a chordal graph and $G_i$ are finite discrete groups, 
then the images of $\prod_{K^1} G_i$ under the faithful representations above
are not in $\IA_{\rho_K}$.
\end{cor}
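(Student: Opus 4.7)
The plan is to chase the diagram immediately before Example~\ref{ex: 1} and compose $\Theta_K$ with the abelianization homomorphism $\Aut(F_{\rho_K})\twoheadrightarrow \GL(\rho_K,\Z)$ whose kernel is $\IA_{\rho_K}$. The key observation is that inner automorphisms of a free group act trivially on its abelianization (a commutator $[g,h]$ vanishes in $F_{\rho_K}^{\rm ab}$), so the composite
\[
\Aut(F_{\rho_K}) \twoheadrightarrow \GL(\rho_K,\Z)
\]
factors through $\Out(F_{\rho_K})$.

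Combining this with the commutative diagram containing $\Theta_K$ and $\Phi_K$, the outer row collapses: the composite
\[
\prod_{K^1} G_i \xrightarrow{\Theta_K} \Aut(F_{\rho_K}) \twoheadrightarrow \GL(\rho_K,\Z)
\]
factors through the projection $p:\prod_{K^1} G_i \twoheadrightarrow \prod_i G_i$. The resulting map $\prod_i G_i \to \GL(\rho_K,\Z)$ is precisely $\Phi_K$, since by construction $\Phi_K$ is the representation on $H_1$ of the fibre of the fibration \eqref{eqn: D-S-fibration} induced by the $\prod_i G_i$-action, which is exactly the abelianization of the conjugation action defining $\Theta_K$.

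The conclusion is now immediate: if the image of $\Theta_K$ were contained in $\IA_{\rho_K}$, then the composite $\Phi_K \circ p$ would be trivial. But the preceding theorem guarantees $\Phi_K$ is faithful (into $\SL(\rho_K,\Z)$ when the $G_i$ are abelian, into $\GL(\rho_K,\Z)$ in general), and $p$ is surjective onto the nontrivial group $\prod_i G_i$, so $\Phi_K \circ p$ is far from trivial---a contradiction.

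There is no substantive obstacle; the entire content amounts to verifying that the two natural constructions of a $\GL(\rho_K,\Z)$-action---first abelianize $\Theta_K$, or first pass to $\Out(F_{\rho_K})$ via $\Phi_K$ and then abelianize---coincide, which is built into the definition of $\Phi_K$ as the monodromy on $H_1$ of the fibre. The only mild subtlety to flag is the standing assumption that at least one $G_i$ is nontrivial (otherwise both groups in sight are trivial and the statement is vacuous).
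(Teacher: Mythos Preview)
Your argument is correct and matches the paper's intent: the paper offers no explicit proof, calling this an ``immediate corollary'' of the short exact sequence $1\to\IA_{\rho_K}\to\Aut(F_{\rho_K})\to\GL(\rho_K,\Z)\to 1$, and your diagram chase is precisely the unpacking of that word ``immediate''. One small overstatement to flag: you invoke faithfulness of $\Phi_K$ into $\GL(\rho_K,\Z)$ ``in general'', but the preceding theorem only establishes this when all $G_i$ are \emph{abelian}; for non-abelian $G_i$ the paper merely asserts that the induced map lands in $\GL(\rho_K,\Z)$ without claiming injectivity there. This does not damage your argument, since you only need $\Phi_K\circ p$ to be \emph{nontrivial}, and that already follows (for instance) from the explicit matrices in the examples, or by noting that the restriction to any pair of nontrivial elements in non-adjacent vertex groups reproduces the two-group case treated in \cite{stafa.monodromy}.
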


\section{Induced maps in homology}\label{sec: induced map in homology}

In this section we prove the following proposition.

\begin{prop}
Let $K$ be a flag complex and $G_1,\dots,G_n$ be finite groups. 
Then the induced map on first homology groups
$$H_1((\underline{EG},\underline{G})^K;\Z) \to H_1((\underline{BG},\underline{1})^K;\Z) $$ 
is the zero map.
\end{prop}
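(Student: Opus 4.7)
The plan is to reduce the statement to a group-theoretic assertion about the short exact sequence \eqref{eqn: s.e.s.} and show that $\pi_1$ of the fibre is contained in the commutator subgroup of the graph product, so that its image under abelianization is zero. Since $K$ is a flag complex, the theorem stated just before \eqref{eqn: s.e.s.} guarantees that each space in the fibration \eqref{eqn: D-S-fibration} is a $K(\pi,1)$, so $H_1$ of the source is $\mathscr A(N)$ and $H_1$ of the target (which I read as $(\underline{BG},\underline{1})^K$, the total space of the fibration) is $\mathscr A(G)$, where $G:=\prod_{K^1}G_i$ and $N:=\pi_1((\underline{EG},\underline{G})^K)$. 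Under these identifications the map of the proposition is induced by the inclusion $N\hookrightarrow G$ from \eqref{eqn: s.e.s.}.

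Next I would compute both abelianizations. Since the abelianization of a graph product (resp.\ direct product) is the direct sum of the abelianizations of the factors,
\[
\mathscr A(G)\;\cong\;\bigoplus_{i=1}^n\mathscr A(G_i)\;\cong\;\mathscr A\!\Bigl(\prod_{i=1}^n G_i\Bigr),
\]
and under these identifications the surjection $p\colon G\twoheadrightarrow\prod_iG_i$ from \eqref{eqn: s.e.s.} becomes the identity, since it sends each generator of $G_i$ (as a factor of the graph product) to the same generator (as a factor of the direct product). Consequently $N=\ker(p)\subseteq\ker(\mathrm{ab}_G)=[G,G]$, so the image of $\mathscr A(N)\to\mathscr A(G)$ is $0$, which forces non-surjectivity whenever $\mathscr A(G)\neq 0$.

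The final step is to note that $\mathscr A(G)=\bigoplus_i\mathscr A(G_i)$ is nonzero in every case of interest, namely whenever at least one $G_i$ fails to be perfect; this is an implicit hypothesis of the proposition, since otherwise both sides are already trivial. The main subtlety is verifying that $p$ becomes an isomorphism (not merely a surjection) after abelianization, which follows from the universal property of graph products. An alternative, essentially equivalent, route is the five-term exact sequence in group homology for \eqref{eqn: s.e.s.}, whose cokernel is exactly $H_1(\prod_i BG_i)=\bigoplus_i\mathscr A(G_i)$, directly measuring the failure of surjectivity.
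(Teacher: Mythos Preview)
Your argument is correct and matches the paper's almost exactly: both show that $N=\ker(p)$ lies in $[G,G]$ by observing that $\mathrm{ab}_G$ factors through $p$ (equivalently, that the graph product and the direct product have the same abelianization), whence the induced map on $H_1$ is the zero map and hence not surjective. One small correction to your edge-case remark: when every $G_i$ is perfect only the \emph{target} $\mathscr A(G)$ vanishes, not the source $\mathscr A(N)$ (which is typically a nonzero free abelian group), so in that case the zero map \emph{is} surjective and the proposition really does need the implicit hypothesis you flag---a gap the paper's proof shares.
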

\begin{proof}
The main ingredient in this proof is the fact that the 
abelianizations of both the fundamental group 
$\pi_1((\underline{BG},\underline{1})^K) \cong \prod_{K^1} G_i$ 
and the product $\prod_{1\leq i \leq n} G_i$ are the same. 
For a group $G$ denote $\mathscr A(G):=G/[G,G]$ and the 
abelianization map $G \to \mathscr A(G)$ by ${\rm ab}_G$. 
Note that the abelianization of $\prod_{K^1} G_i$ factors 
through the group $\prod_{1\leq i \leq n} G_i$:
$$
\prod_{K^1} G_i \to \prod_{1\leq i \leq n} G_i 
			\xrightarrow{ab_G} \mathscr A(\prod_{K^1} G_i).
$$
Let
$G:=\prod_{K^1} G_i,\, H:=\prod_{1\leq i \leq n} G_i$, 
and $N=\pi_1 ((\underline{EG},\underline{G})^K).$
Note that in general, for any abelian group $A$ and a surjection 
$h:G \twoheadrightarrow A$, there is a unique map 
$\phi: \mathscr A(G) \twoheadrightarrow A$ such that 
$\phi \circ {\rm ab}_G = h$. 

Consider the following commutative diagram

\begin{center}
\begin{tikzcd}
 1 \arrow{r} & N \arrow{r}{i} \arrow{d}{{\rm ab}_N} & G\arrow{r}{p} \arrow{d}{{\rm ab}_G} & H \arrow{r} \arrow{d}{{{\rm ab}_H}} & 1, \\
 {}  & {\mathscr{A}(N)}\arrow{r}{f}  & \mathscr{A}(G) \arrow{r}{=} & \mathscr{A}(H)  & 
\end{tikzcd}
\end{center}
where $p\circ i =1$ (or 0 if $H$ is abelian), and ${\rm ab}_G \circ i = {\rm ab}_H \circ p \circ i = 0$.

Now, since $\mathscr{A}(G)$ is an abelian group, there is a unique map $f: \mathscr A(N) \to \mathscr A(G)$ such that ${\rm ab}_G \circ i = f \circ {\rm ab}_N$. Since ${\rm ab}_G \circ i=0 = f \circ {\rm ab}_N,$ and ${\rm ab}_N$ is clearly not trivial, then $f$ cannot be onto. Actually $f$ is the zero map since the composition in the bottom row 
$\mathscr{A}(N) \to \mathscr{A}(G) \to \mathscr{A}(H)$ is the zero map, and the second map is an isomorphism.
Therefore, $H_1((\underline{EG},\underline{G})^K;\Z) \to H_1((\underline{BG},\underline{G})^K;\Z)$ is the zero map. 
\end{proof}

This proposition is in the spirit of the induced maps in homology 
introduced in the next section, concerning the spaces
$B(2,G)$ and $E(2,G)$ defined below. We seek a similar result
in that case, too.

\section{CT groups and Feit-Thompson theorem}\label{sec: B(2,G) and E(2,G)}

In this section we study commutative transitive groups defined
below, and use some methods from polyhedral products to understand
the interplay between topology and group theory, and 
characterize some group properties using topology.  
For any group $\pi$ the descending central series is given by
a sequence of normal subgroups
$$
\pi=\Gamma^1 \triangleright \Gamma^2  \triangleright
	\cdots \triangleright \Gamma^{n+1} \triangleright \cdots
$$
where inductively $\Gamma^{n+1}=[\pi,\Gamma^{n}]$ for $n\geq 2.$
If $\pi=F_n$ is the free group of rank $n$, then for any topological group
$G$ there is a filtration
$$
{\rm Hom}(F_n/\Gamma^2,G) \subset {\rm Hom}(F_n/\Gamma^3,G) \subset \cdots \subset G^n.
$$
The sequences of spaces given by 
$$B_k(q,G):={\rm Hom}(F_k/\Gamma^q,G)\subset G^k$$
and
$$E_k(q,G):=G\times {\rm Hom}(F_k/\Gamma^q,G) \subset G^{k+1}$$ 
have the structure of simplicial complexes (\cite{fredb2g}), respectively, 
with respective geometric realizations defined as follows
\begin{align*}
B(q,G):= |B_\ast(q,G)|, \text{ and } E(q,G):=|B_\ast(q,G)|. 
\end{align*}
The projections $E_k(q,G) \twoheadrightarrow B_k(q,G)$ induce a fibration
\begin{equation}\label{eqn: fibration of E(q,G) to B(q,G)}
E(q,G) \to B(q,G) \to BG
\end{equation}
and in particular, for $q=2$ we have
\begin{equation}\label{eqn: fibration of E(2,G) to B(2,G)}
E(2,G) \to B(2,G) \to BG.
\end{equation}

The total space $B(2,G)$ and  the homotopy fibre $E(2,G)$ 
were studied by A. Adem, F. Cohen and E. Torres Giese
\cite{fredb2g}. They posed the question 
whether for finite $G$ the space $B(2,G)$ is always a $K(\pi,1)$,  
having showed that these spaces are occasionally $K(\pi,1)$ 
for the case of {\it commutative transitive groups}.
C. Okay \cite{okay,okay2015colimits} gave 
classes of groups for which $B(2,G)$ is not a $K(\pi,1)$, such as
extraspecial 2-groups of order $2^{2n+1}$, for $n\geq2$, hence 
answering their question. A brief survey is given in \cite[\S  9]{stafa.comm.2}.

\begin{defn}\label{defn: CT group}
A group $G$ is \textbf{commutative transitive} or \textit{CT} 
if commutativity is a transitive relation in $G$. That is, if 
$[a,b]=[b,c]=1$, then $[a,c]=1$ for all non-central elements 
$a,\,b,\,c \in G$.
\end{defn}

The class of CT groups played an important role in the 
classification of finite simple groups and were studied by M. Suzuki
\cite{suzuki1957nonexistence,suzuki1986group}, among many others,
who showed that every non-abelian simple CT-group is of even order and 
isomorphic to $\PSL(2, 2^f)$ for some $f\geq2$. 
Finite CT groups have been classified, see for example 
\cite[p. 519, Theorem 9.3.12]{schmidt1994subgroup}.

In particular, if $G$ is a finite CT group with trivial center
then the following is true.
\begin{prop}[{{\cite[Cor. 8.5]{fredb2g}}}]
If there are maximal abelian subgroups $G_1,\dots, G_n$ of $G$ that cover $G$, 
then there is a homotopy equivalence $B(2,G) \simeq \bigvee_i BG_i$. 
\end{prop}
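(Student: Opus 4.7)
The plan is to establish a pointed wedge decomposition of $B(2,G)$ at the level of simplicial sets, and then pass to geometric realizations. Recall that the simplicial set $B_\bullet(2,G)$ has as its $k$-simplices the set $B_k(2,G) = \mathrm{Hom}(\Z^k, G)$ of pairwise commuting $k$-tuples in $G$, with the face and degeneracy maps coming from the usual bar construction; for each abelian subgroup $G_i \leq G$, the simplicial subspace $G_i^\bullet \hookrightarrow B_\bullet(2,G)$ realizes to the classifying space $BG_i$.

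First I would verify the set-theoretic identity
\[
B_k(2,G) \;=\; \bigcup_{i=1}^n G_i^k
\]
for every $k \geq 0$. The inclusion $\supseteq$ is automatic because each $G_i$ is abelian, so any tuple in $G_i^k$ is pairwise commuting. For the reverse inclusion, take a pairwise commuting tuple $(g_1,\dots,g_k)$ and use the covering hypothesis together with the CT property (which is the governing context of this section, \textit{cf.} Definition~\ref{defn: CT group}): since the $g_j$ commute pairwise and commutativity is transitive on non-central elements, the set $\{g_1,\dots,g_k\}$ generates an abelian subgroup, which is contained in some maximal abelian subgroup, and by the covering hypothesis this maximal abelian subgroup may be taken to be one of the $G_i$.

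Next I would show that distinct $G_i$, $G_j$ intersect trivially, i.e.\ $G_i \cap G_j = \{1\}$. In a CT group with trivial center, any $g \in G_i \cap G_j$ with $g \neq 1$ would commute with every element of $G_i \cup G_j$, hence by transitivity with every element in their union, forcing $G_i \cup G_j$ to lie in a common abelian subgroup and contradicting the maximality of $G_i$ and $G_j$ (unless $G_i = G_j$). Consequently
\[
G_i^k \cap G_j^k \;=\; (G_i \cap G_j)^k \;=\; \{(1,\dots,1)\}
\]
for all $i \neq j$ and all $k$, and this intersection is exactly the (degenerate) basepoint of $B_\bullet(2,G)$.

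Combining the two steps, I conclude that as pointed simplicial sets
\[
B_\bullet(2,G) \;=\; \bigvee_{i=1}^n G_i^\bullet,
\]
where the wedge is taken at the identity-tuple basepoint in each simplicial degree and the face/degeneracy maps restrict compatibly on each wedge summand. Geometric realization commutes with wedges of pointed simplicial sets, so applying $|\cdot|$ yields
\[
B(2,G) \;=\; |B_\bullet(2,G)| \;\simeq\; \bigvee_{i=1}^n |G_i^\bullet| \;=\; \bigvee_{i=1}^n BG_i,
\]
which is the desired homotopy equivalence. The main obstacle I anticipate is the trivial-intersection step: the statement as written does not explicitly assume trivial center, so I would either import that hypothesis from the surrounding discussion (which is what Theorem~\ref{thm: top. equiv. form CT groups INTRO} restricts to) or justify it from the maximality of the $G_i$ together with the covering property. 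Everything else is a bookkeeping exercise in simplicial sets.
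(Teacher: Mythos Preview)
The paper does not supply its own proof of this proposition; it is stated as a citation of \cite[Cor.~8.5]{fredb2g} and used as input for what follows. So there is nothing in the paper to compare your argument against line by line.

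Your argument is the standard one and is essentially correct. Two small points worth tightening. First, in the step ``by the covering hypothesis this maximal abelian subgroup may be taken to be one of the $G_i$'', the covering hypothesis alone does not give this; what you really use is that in a CT group with trivial center the centralizer of any nonidentity element is abelian, hence equals the unique maximal abelian subgroup containing that element. Concretely: pick any nonidentity coordinate $g_j$ of your commuting tuple, choose $G_i$ containing $g_j$ (covering), and then observe $C_G(g_j)=G_i$ by maximality, so every coordinate lies in $G_i$. You essentially say this, but the phrasing makes it sound as though the covering hypothesis is doing the work. Second, you are right to flag that the trivial-center hypothesis is not in the displayed statement but is imported from the sentence immediately preceding the proposition (``if $G$ is a finite CT group with trivial center then the following is true''); without it both the covering step and the trivial-intersection step fail, so this should be stated as a standing assumption rather than an afterthought.
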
  
With the assumptions of this proposition we have the following corollary.
\begin{cor}
$B(2,G)$ has the homotopy type of the polyhedral product
$(\underline{BG},\underline{1})^{K^0}.$
\end{cor}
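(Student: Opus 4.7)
The plan is to combine the homotopy equivalence from the preceding proposition with the identification of a polyhedral product over a 0-skeleton as a wedge of the constituent spaces. Concretely, under the hypothesis that the maximal abelian subgroups $G_1,\dots,G_n$ of $G$ cover $G$, the cited result of Adem--Cohen--Torres-Giese already furnishes the equivalence
\[
B(2,G) \simeq \bigvee_{i=1}^n BG_i.
\]
Thus the only remaining task is to recognize the right-hand side as a polyhedral product of the correct form.

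To do this I would apply Example \ref{example: polyhedral products}(3) with the pointed pairs $(BG_i, 1_i)$, where the basepoint $1_i \in BG_i$ is the standard basepoint. Since $K^0$ is the discrete set of $n$ vertices, each piece $\mathcal{D}(\{v_i\})$ in the definition of the polyhedral product consists of tuples whose coordinates in every factor except the $i$th are forced to be basepoints, so $\mathcal{D}(\{v_i\})$ is a copy of $BG_i$ sitting inside the product via the standard wedge inclusion. Taking the union over the $n$ vertices gives
\[
(\underline{BG},\underline{1})^{K^0} \;=\; \bigcup_{i=1}^n \mathcal{D}(\{v_i\}) \;=\; \bigvee_{i=1}^n BG_i,
\]
exactly as recorded in the example.

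Composing the two identifications yields $B(2,G) \simeq (\underline{BG},\underline{1})^{K^0}$, which is the claim. There is essentially no obstacle here: the proof is just a transcription matching the two descriptions of the same wedge, and everything nontrivial has already been absorbed into the proposition of Adem--Cohen--Torres-Giese and the 0-skeleton computation in the earlier example. The only minor care needed is to ensure the basepoints chosen for the wedge decomposition of $B(2,G)$ agree (up to homotopy) with the basepoints $\underline{1}$ used in the polyhedral product, which is immediate since both are given by the identity elements of the respective groups.
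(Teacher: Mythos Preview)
Your proposal is correct and matches the paper's intent exactly: the corollary is stated without proof because it is the immediate juxtaposition of the Adem--Cohen--Torres-Giese proposition $B(2,G)\simeq \bigvee_i BG_i$ with the identification $(\underline{BG},\underline{1})^{K^0}=\bigvee_i BG_i$ from Example~\ref{example: polyhedral products}(3). There is nothing to add.
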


In what follows $G$ is assumed to be finite.

Using the five term short exact sequence from the 
Lyndon-Hochschild-Serre spectral sequence Adem, Cohen and Torres Giese
\cite[Proposition 7.2]{fredb2g} showed that the non-surjectivity 
of the induced map on first homology of the fibration 
(\ref{eqn: fibration of E(2,G) to B(2,G)})
$$ H_1(E(2,G)) \to H_1(B(2,G))$$
is equivalent to the Feit-Thompson theorem that groups of odd 
order are solvable. Hence the study of the fibration encodes 
fundamental information about the group $G$.
We would like to use polyhedral products, i.e. topology, 
to extract more information about this equivalent form of 
the Feit-Thompson theorem \cite{feit1963chapter}, 
which is algebraic in nature.

\

Let $G$ be a finite CT group with trivial center 
and let $G_1,\dots, G_n$ be its cover by maximal abelian subgroups as above 
($n$ is called the {{\it covering number}} of $G$). 
Since all spaces are $K(\pi,1)$'s, we will move frequently between fundamental groups 
and their classifying spaces. Note that there are two commutative diagrams of 
short exact sequences of groups:

\begin{equation}\label{eqn: first diagram}
\begin{tikzcd}
 \pi_1(E(2,G)) \arrow{r}\arrow{d} & {\mathcal H} \arrow{r} \arrow{d} & {[G,G]} \arrow{d}\\
 \pi_1(E(2,G)) \arrow{r}\arrow{d} & \pi_1(B(2,G)) \arrow{r} \arrow{d}& \pi_1(BG) \arrow{d}\\
 \ast \arrow{r} 	&   H_1(BG) \arrow{r} & H_1(BG),
\end{tikzcd}
\end{equation}
and

\begin{equation}\label{eqn: commutative diagram}
\begin{tikzcd}
  N \arrow{r}\arrow{d} & \pi_1(\underline{EG},\underline{G})^{K^0} \arrow{r} \arrow{d}{i_2} & {[G,G]} \arrow{d}\\
 \pi_1(E(2,G)) \arrow{r}{i_1} \arrow{d}{p_1} & \pi_1(B(2,G)) \arrow{r}{p_2} \arrow{d}{p_3}& \pi_1(BG) \arrow{d}\\
 Q \arrow{r} 	&   \pi_1(\prod_i BG_i) \arrow{r} & H_1(BG),
\end{tikzcd}
\end{equation}
where $Q$ is a finite abelian group and $N$ is a free group (we omit the trivial groups
on each side of the short exact sequences). The existence of the first diagram is clear, 
whereas for the second diagram, even though for CT groups the map $\pi_1(B(2,G)) \to G$ 
does not factor through the product $\prod_i G_i$, the composition $\pi_1(B(2,G)) \to G  \to H_1(G),$ 
being an epimorphism onto an abelian group, factors uniquely through the abelianization 
of $\pi_1(B(2,G))$, which is the direct product $\prod_i G_i$.

The map $p_1$ factors uniquely through the abelianization $H_1(E(2,G))$, hence there is a 
map $q_1$ such that $p_1 = q_1 \circ {\rm ab}$. Hence there is a diagram 
\begin{equation}\label{eqn: small commutative diagram}
\begin{tikzcd}
 \pi_1(E(2,G)) \arrow{r}{i_1} \arrow{d}{{\rm ab}} & \pi_1(B(2,G))  \arrow{d}{p_3}\\
 H_1(E(2,G)) \arrow{d}{q_1} \arrow[dotted]{r}{(i_1)_\ast} &  \prod_i G_i \arrow[equal]{d}\\
 Q \arrow{r}	&   \prod_i G_i ,
\end{tikzcd}
\end{equation}
where the dotted map is the one we are interested in 
(we are not claiming that the lower square commutes). 
By \cite[Proposition 8.8]{fredb2g} the group $\pi_1(E(2,G))$ 
is free, with rank
$$ \mathcal{N}_G=1-|G:Z(G)| + \sum_{1\leq i \leq n} \left( |G:Z(G)|-|G:G_i|\right). $$
Since $Z(G)=\{1_G\}$, by rearranging the terms of $ \mathcal{N}_G$ 
we obtain the following:
\begin{align*}
	\mathcal{N}_G 	&= 1-|G:Z(G)| + \sum_{1\leq i \leq n} \left(  |G:Z(G)|-|G:G_i|\right)\\
				&= 1-|G| + \sum_{1\leq i \leq n} \left( |G|-|G|/|G_i|\right)\\
				&= 1-|G| + n|G| - \sum_{1\leq i \leq n} |G|/|G_i|\\
				&= 1+ (n-1)|G| - \sum_{1\leq i \leq n} |G|/|G_i|.
\end{align*}
Note that this is a more general version of the formula 
for $\rho_K$ in equation (\ref{eqn: rank of kernel for chordal graph}), 
with the special case of $|G|=\prod_i|G_i|$ giving the rank $\rho_K$ 
when $K$ is only a set of $n$ points; let us use the notation
$\rho_K= \rho(n)$ since $K$ becomes irrelevant. In general 
$l.c.m.(|G_1|,\dots,|G_n|) \leq |G|< \prod_i |G_i| $, since the 
groups $G_i$ cover $G$. Actually they divide each other from 
left to right. Since $|G|$ divides $\prod_i |G_i|,$ then 
$\prod_i |G_i|=C |G|$. Therefore, we have 
\begin{align*}
\rho(n)-\mathcal N_G &= (n-1)|G|(C-1) - \sum_{1\leq j\leq n}(|G|(C-1))/|G_j|\\
			& = |G|(C-1)\left( n-1 - \sum_{1\leq j\leq n}1/|G_j| \right).
\end{align*}
Since all $|G_i|\geq 2$, then we have 
$\rho(n)-\mathcal N_G \geq |G|(C-1)(n/2-1).$
If $n=2$ then $G$ is abelian, so assume that $n\geq 3$. 
Then we get $\rho(n)-\mathcal N_G >0$.

\begin{lemma}
Let $G$ be a finite CT group with trivial center. Then $\rho(n)>\mathcal N_G.$
\end{lemma}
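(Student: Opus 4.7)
The plan is to formalize the calculation already carried out in the discussion preceding the statement. Setting $C := \prod_i |G_i|/|G|$, substitution into the formulas for $\rho(n)$ and $\mathcal N_G$ yields the identity
\[
\rho(n) - \mathcal N_G \;=\; |G|\,(C-1)\,\Bigl(n-1 - \sum_{i=1}^n \tfrac{1}{|G_i|}\Bigr),
\]
so it suffices to show that each of the two factors $C-1$ and $n-1-\sum 1/|G_i|$ is strictly positive.

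First I would rule out small covering numbers. If $n = 1$ then $G = G_1$ is abelian, and a finite abelian group with trivial center is trivial, which is excluded. If $n = 2$ then $G = G_1 \cup G_2$ is a union of two proper subgroups, which is classically impossible unless one of them equals $G$; in either case $G$ would be abelian, again a contradiction. Hence $n \geq 3$. Since each maximal abelian subgroup $G_i$ contains the identity and at least one further element, $|G_i| \geq 2$, so $\sum_i 1/|G_i| \leq n/2$. Therefore $n - 1 - \sum_i 1/|G_i| \geq n/2 - 1 > 0$ whenever $n \geq 3$.

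For the remaining factor, I would exploit the fact that in a CT group with trivial center the non-identity elements are partitioned by the sets $G_i \setminus \{1\}$, giving $|G| - 1 = \sum_i (|G_i| - 1)$. Writing $a_i := |G_i| - 1 \geq 1$, the elementary expansion
\[
\prod_i (1 + a_i) \;=\; 1 + \sum_i a_i + \sum_{|S| \geq 2} \prod_{i \in S} a_i
\]
yields $\prod_i |G_i| \geq |G|$, with strict inequality as soon as at least two of the $a_i$ are positive; this holds because $n \geq 3$ and every $a_i \geq 1$, so $C > 1$. Both factors of the displayed identity are thus strictly positive, and the inequality $\rho(n) > \mathcal N_G$ follows. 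The only step requiring any genuine thought is the elimination of $n \leq 2$ (which hinges on the structural consequence of the trivial-center hypothesis); the rest is a direct substitution and an elementary estimate, so no serious obstacle is expected.
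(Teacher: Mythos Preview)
Your argument is correct and coincides with the direct computation the paper carries out immediately \emph{before} stating the lemma: you reproduce the identity $\rho(n)-\mathcal N_G=|G|(C-1)\bigl(n-1-\sum_i 1/|G_i|\bigr)$, rule out $n\le 2$, and bound the second factor via $|G_i|\ge 2$. You go further than the paper in justifying $C>1$: the paper simply asserts $|G|<\prod_i|G_i|$ ``since the groups $G_i$ cover $G$,'' whereas you invoke the partition $|G|-1=\sum_i(|G_i|-1)$ coming from the CT-with-trivial-center hypothesis and then the elementary expansion of $\prod(1+a_i)$.

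The paper's proof block, however, offers a second argument that you did not use: it applies the Schreier index formula to the common finite-index free subgroup $N$ sitting inside both $\pi_1(E(2,G))$ and $\pi_1((\underline{EG},\underline{G})^{K^0})$, obtaining $(\mathrm{rank}(N)-1)/(\mathcal N_G-1)=|Q|$ and $(\mathrm{rank}(N)-1)/(\rho(n)-1)=|[G,G]|$, so that $\rho(n)>\mathcal N_G$ reduces directly to $|G|<\prod_i|G_i|$. This route is slicker in that it bypasses the explicit formula for the difference and yields the sharper quantitative statement $(\rho(n)-1)/(\mathcal N_G-1)=|Q|/|[G,G]|$; your route is more self-contained since it does not rely on the diagram~(\ref{eqn: commutative diagram}) or the identification of the indices.
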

\begin{proof}
In addition to the above argument, this is also a direct consequence
of the fact that the index of $N$ in each of the free groups is given by
the following formula \cite[p.16]{lyndon2015combinatorial}:
\begin{align*}
 & [ \pi_1(E(2,G)) : N] = \frac{{\rm rank} (N) - 1}{ {\rm rank} (\pi_1(E(2,G)))-1} = |Q| = \frac{\prod_i|G_i|}{|H_1(G)|},
\text{ and,}\\
 & [\pi_1((\underline{EG},\underline{G})^{K_0}):N] = \frac{{\rm rank} (N) - 1}{ {\rm rank} (\pi_1((\underline{EG},\underline{G})^{K_0})-1} = |[G,G]| = \frac{|G|}{|H_1(G)|}.
\end{align*}
Since $|G| < {\prod_i|G_i|}$ the lemma follows.
\end{proof}

Indeed the proof of this lemma tells us that
$\rho(n)/\mathcal N_G  \sim |Q|/|[G,G]|$.

Before we proceed, it is clear from the diagrams 
(\ref{eqn: first diagram},\ref{eqn: commutative diagram}) 
that if $G/[G,G]=1$, then the induced
map on homology $H_1(E(2,G)) \to H_1(B(2,G))$ is onto (without using the 5-term sequence
in homology).

\begin{prop}
{If $G$ is simple, then the following map is a surjection
$$H_1(E(2,G);\Z) \to H_1(B(2,G);\Z). $$ }
\end{prop}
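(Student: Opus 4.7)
The plan is to reduce the statement to the observation recorded immediately before the proposition: namely, that if $G/[G,G]=1$, then the induced map $H_1(E(2,G);\Z) \to H_1(B(2,G);\Z)$ is onto. The whole content of the proposition is then that simplicity forces the abelianization of $G$ to vanish.

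First I would dispose of the group-theoretic step. The commutator subgroup $[G,G]$ is a normal subgroup of $G$, so by simplicity it is either trivial or equal to $G$. If $G$ were abelian simple (that is, $[G,G]=1$), then $G \cong \Z/p$, but this case is already outside the framework of CT groups with trivial center which governs this section; for the non-abelian simple case we get $[G,G]=G$, and hence $H_1(BG;\Z) = G/[G,G] = 0$.

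Next I would deduce the surjectivity. The most transparent route is the five-term exact sequence in integral homology associated to the fibration (\ref{eqn: fibration of E(2,G) to B(2,G)}):
$$ H_2(BG;\Z) \to H_0\bigl(BG;\,H_1(E(2,G);\Z)\bigr) \to H_1(B(2,G);\Z) \to H_1(BG;\Z) \to 0. $$
Since $H_1(BG;\Z)=0$ by the previous paragraph, exactness forces the map from the coinvariants $H_0(BG;H_1(E(2,G);\Z)) \to H_1(B(2,G);\Z)$ to be surjective. This map factors through $H_1(E(2,G);\Z)$ (the canonical surjection onto the coinvariants), so the composite $H_1(E(2,G);\Z) \to H_1(B(2,G);\Z)$ is itself surjective.

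Alternatively, and more in the spirit of this section, one can read the same conclusion directly off the commutative diagrams (\ref{eqn: first diagram}) and (\ref{eqn: commutative diagram}): the obstruction to surjectivity of the induced map on $H_1$ is precisely the quotient $H_1(BG)$, which vanishes under our hypothesis. There is no real obstacle here; the substantive content is the simplicity argument at the group-theoretic level, and the topological step is essentially the observation already stated before the proposition.
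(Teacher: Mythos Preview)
Your proof is correct and follows the same reduction as the paper: simplicity forces $G/[G,G]=1$, and then one invokes the observation recorded just before the proposition. The only difference in emphasis is that your primary justification for that observation goes through the five-term exact sequence, whereas the paper explicitly notes that it follows directly from diagrams~(\ref{eqn: first diagram}) and~(\ref{eqn: commutative diagram}) \emph{without} the five-term sequence; you do mention this diagram route as your alternative, so the approaches coincide.
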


Instead, using only topology we want to prove the following equivalent statements: 
\textit{if the map $H_1(E(2,G)) \to H_1(B(2,G))$ is onto, then $|G|$ is even,} 
or equivalently,
 \textit{if $|G|$ is odd, then the map $H_1(E(2,G)) \to H_1(B(2,G))$ is not onto}.

\

Now, if $|G|$ is odd, then all $Q, \, \prod_i G_i$ and $[G,G]$ are odd. Also $N$ is a free group 
of odd index in both free groups $\pi_1(E(2,G))$ and $\pi_1(EG_i,G_i)^{K_0}$.
The following results are immediate:
\begin{lemma}
Either all $\rho(n),\, \mathcal N_G,\, {\rm rank}(N)$ are even, 
or, all $\rho(n),\,\mathcal N_G,\, {\rm rank}(N)$ are odd, 
such that the ratios
$$
\frac{{\rm rank}(N)-1}{\mathcal N_G-1},\, \frac{{\rm rank}(N)-1}{\rho(n)-1}, \, \frac{\rho(n)-1}{\mathcal N_G-1}
$$
are odd.
\end{lemma}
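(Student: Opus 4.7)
The plan is to reduce everything to the Nielsen--Schreier index identities that were already recorded in the preceding lemma, and then just track parities. Since $|G|$ is odd by hypothesis, every divisor and every quotient order built from $G$ is odd. In particular $|H_1(G)|$ (a quotient of $G$), $|[G,G]|$ (a subgroup of $G$), each $|G_i|$ (each $G_i$ is a subgroup of $G$), and $\prod_i |G_i|$ are all odd. Consequently
$$
|Q|\;=\;\frac{\prod_i |G_i|}{|H_1(G)|}\quad\text{and}\quad |[G,G]|\;=\;\frac{|G|}{|H_1(G)|}
$$
are odd integers.

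Next I would invoke the two Schreier index formulas recalled in the proof of the previous lemma, namely
$$
\frac{\operatorname{rank}(N)-1}{\operatorname{rank}(\pi_1(E(2,G)))-1}\;=\;|Q|,\qquad
\frac{\operatorname{rank}(N)-1}{\operatorname{rank}(\pi_1((\underline{EG},\underline{G})^{K^0}))-1}\;=\;|[G,G]|,
$$
together with the identifications $\operatorname{rank}(\pi_1(E(2,G)))=\mathcal{N}_G$ and $\operatorname{rank}(\pi_1((\underline{EG},\underline{G})^{K^0}))=\rho(n)$. Dividing one relation by the other yields the third ratio
$$
\frac{\rho(n)-1}{\mathcal{N}_G-1}\;=\;\frac{|Q|}{|[G,G]|}\;=\;\frac{\prod_i|G_i|}{|G|},
$$
which is likewise odd since both numerator and denominator are. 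This already establishes the odd-ratio assertion.

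Finally, parity matching is immediate: if $(a-1)/(b-1)$ is an odd integer then $a-1$ and $b-1$ have the same parity, hence so do $a$ and $b$. Applying this to each of the three ratios gives that $\operatorname{rank}(N)$, $\mathcal{N}_G$, and $\rho(n)$ pairwise share a common parity, so either they are all even or all odd, as claimed. There is really no obstacle here beyond keeping the three index formulas straight; the entire content of the lemma is that ``odd order'' propagates through Nielsen--Schreier.
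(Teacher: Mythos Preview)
Your proof is correct and is exactly the argument the paper has in mind; the paper's own proof is the single line ``Use the formulas in the proof of the previous Lemma,'' and you have simply unpacked that sentence. The one place you are slightly quick is in asserting that $\prod_i|G_i|/|G|$ is odd ``since both numerator and denominator are'': strictly speaking you need that this ratio is an integer, which the paper established just above when it wrote $\prod_i|G_i|=C|G|$; once it is an integer, oddness of numerator and denominator forces $C$ odd as you say.
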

\begin{proof}
Use the formulas in the proof of the previous Lemma.
\end{proof}

Next note that the map $(i_1)_\ast$ in (\ref{eqn: small commutative diagram})
can be a surjection only if $Q \lneq {\rm Im}((i_1)_\ast)$ 
(if not then their intersection is at most $Q$ and the image 
cannot be everything). Consider the following diagram
\begin{center}
\begin{tikzcd}
    & {\rm Ker}_1 \arrow{d}  & \\
{\rm Ker}_2 \arrow{r}  & H_1(E(2,G)) \arrow{d}{q_1} 
		\arrow{r}{(i_1)_\ast} & {\rm Im}((i_1)_\ast) \arrow{d}\\
 & Q \arrow{r}{i} 	&   {\rm Im}((i_1)_\ast).
\end{tikzcd}
\end{center}
The image ${\rm Im}((i_1)_\ast) $ has odd order. Since both kernels have full rank
(the quotients are both finite groups)
we have that ${\rm Ker}_2 \leq {\rm Ker}_1$. The kernels have bases as follows
$$ 	{\rm Ker}_1=span\{\alpha_i e_i: i \in[N_G]\},		\text{ and } 
	{\rm Ker}_2=span\{\beta_i e_i: \beta_i|\alpha_i,\, \forall i \in[N_G]\}.$$
Here all $\alpha_i,\beta_i$ have to be odd numbers such that $\alpha_i|\beta_i $ for all $i$. 
Indeed this can be done for any (finite) sequence of subgroups
$$ Q < Q_1 < \cdots < {\rm Im}((i_1)_\ast) < \cdots <\prod_i G_i$$
as there are kernels $K_0,\, K_1,\dots,$ of full ranks corresponding to projections.
The following theorem shows that ${\rm Im}((i_1)_\ast) \lneq \prod_i G_i$ for the
case of CT groups with trivial center.
We conclude this section with the following corollary.

\begin{cor}\label{cor: top. equiv. form CT groups}
Finite CT groups with trivial center are solvable if and only if the induced map
$H_1(E(2,G);\Z) \to H_1(B(2,G);\Z)$ is not a surjection.
\end{cor}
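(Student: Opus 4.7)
The plan is to reduce the stated equivalence to the purely group-theoretic question of whether $G$ is perfect, and then settle that via Suzuki's classification of finite CT groups.

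First, I would apply the 5-term exact sequence of the Lyndon-Hochschild-Serre spectral sequence to the fibration $E(2,G) \to B(2,G) \to BG$, which yields an exact sequence
\begin{equation*}
H_1(E(2,G);\Z)_{G} \to H_1(B(2,G);\Z) \to H_1(BG;\Z) \to 0.
\end{equation*}
Since $H_1(B(2,G);\Z)$ is abelian, the map $H_1(E(2,G);\Z)\to H_1(B(2,G);\Z)$ factors through the coinvariants $H_1(E(2,G);\Z)_G$, so the two maps have the same image. Hence the cokernel of the map in question is exactly $H_1(BG;\Z)=G/[G,G]$. Consequently the map is a surjection if and only if $G/[G,G]=1$, i.e. $G$ is perfect. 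This converts the desired equivalence into the group-theoretic statement: \emph{a finite CT group $G$ with trivial center is solvable if and only if $G$ is not perfect.}

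Next, I would prove this group-theoretic equivalence using the classification. For the forward direction, if $G$ is a nontrivial solvable group, then the derived series strictly descends, so the first term $G/[G,G]$ is nontrivial and $G$ is not perfect. For the backward direction, suppose $G$ is not solvable. By Suzuki's classification cited in the paper (every non-abelian simple CT-group is isomorphic to $\PSL(2,2^f)$ for some $f\geq 2$), together with the structure theorem for finite CT groups with trivial center, the group $G$ must be isomorphic to such a $\PSL(2,2^f)$. This group is simple and non-abelian, so the normal subgroup $[G,G]$ has abelian quotient and therefore equals either $\{1\}$ or $G$; since $G$ is non-abelian, $[G,G]=G$, so $G$ is perfect. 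Combining the two directions, $G$ is solvable if and only if $G$ is not perfect, which by the first step is equivalent to the non-surjectivity of $H_1(E(2,G);\Z)\to H_1(B(2,G);\Z)$.

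The main obstacle is the correct identification of the cokernel via the 5-term sequence, since a priori it is coinvariants that appear; this is resolved by the factorization remark above. Alternatively, in the CT case with trivial center one can avoid the spectral sequence entirely by using the homotopy equivalence $B(2,G)\simeq \bigvee_i BG_i$, which gives $H_1(B(2,G);\Z)=\bigoplus_i G_i$, together with the diagram (\ref{eqn: small commutative diagram}) to identify the image of $(i_1)_\ast$ as the kernel of the summation map $\bigoplus_i G_i \to G/[G,G]$; this kernel is proper precisely when $G/[G,G]\neq 1$, yielding the same conclusion and connecting the argument to the rank and index computations developed in the preceding pages.
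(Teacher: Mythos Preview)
Your argument is correct. The reduction to ``surjection $\Leftrightarrow$ $G$ perfect'' via the 5-term sequence is valid (your factorization through coinvariants is the right way to close that gap), and your use of Suzuki's classification to conclude ``solvable $\Leftrightarrow$ not perfect'' for finite CT groups with trivial center is exactly the group-theoretic input needed.

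The paper takes a somewhat different route: it deliberately avoids the 5-term sequence and instead chases the diagrams (\ref{eqn: commutative diagram}) and (\ref{eqn: small commutative diagram}) built from the polyhedral-product description $B(2,G)\simeq\bigvee_i BG_i$. From those diagrams it reads off that the image of $(i_1)_\ast$ is exactly the subgroup $Q=\ker\bigl(\prod_i G_i\to G/[G,G]\bigr)$, so $(i_1)_\ast$ is onto iff $G/[G,G]=1$. This is precisely your alternative paragraph, so you have in fact anticipated the paper's method. One point worth noting: the paper's proof stops at ``surjection $\Leftrightarrow$ $G$ perfect'' and leaves the passage to solvability implicit, whereas you spell out that the CT classification is what supplies the missing step ``not perfect $\Rightarrow$ solvable'' (which is false for general finite groups). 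In that sense your write-up is more complete. The trade-off is that the 5-term sequence argument works for any $G$ for which the fibration exists, while the paper's diagram argument is tailored to CT groups but stays closer to the polyhedral-product machinery developed in the section.
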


Of course this is a special case of the condition in \cite[Proposition 7.2]{fredb2g},
but for this corollary we use only the diagrams (\ref{eqn: commutative diagram},\ref{eqn: small commutative diagram}).

\begin{proof}[Proof of Corollary \ref{cor: top. equiv. form CT groups}]
Consider the commutative diagram (\ref{eqn: commutative diagram}) and 
(\ref{eqn: small commutative diagram}). If the map $(i_1)_\ast$ is a 
surjection, then the composition
$$H_1(E(2,G);\Z) \twoheadrightarrow H_1(B(2,G);\Z) \twoheadrightarrow G/[G,G]$$
is a surjection. On the other hand, from diagram (\ref{eqn: commutative diagram}),
the group $H_1(E(2,G);\Z)$ maps trivially, hence $G/[G,G]$ is trivial. 
On the other hand, if $G/[G,G]$ is trivial, then $Q=\prod_i G_i$ and $(i_1)_\ast$
is a surjection.
\end{proof}

The following question is still open for CT groups: 
{\it Use Corollary \ref{cor: top. equiv. form CT groups} to show that if
$G$ is a simple finite TC group with trivial center, then $G$ has even order}.


\end{document}